\newtheorem{theorem}{Theorem}
\newtheorem{proposition}[theorem]{Proposition}
\newtheorem{lemma}[theorem]{Lemma}
\newtheorem{corollary}[theorem]{Corollary}
\theoremstyle{definition}
\newtheorem{definition}[theorem]{Definition}
\newtheorem{remark}[theorem]{Remark}
\newtheorem{example}[theorem]{Example}
\newcommand{\QQ}{{\mathbb Q}}
\newcommand{\RR}{{\mathbb R}}
\newcommand{\SSS}{{\mathbb S}}
\newcommand{\cU}{{\mathcal U}}
\newcommand{\cL}{{\mathcal L}}
\newcommand{\cF}{{\mathcal F}}
\newcommand{\vmod}{v_\text{mod}}
\begin{document}

\vspace*{7ex}

\title[Regular corank one Poisson manifolds] {Codimension one symplectic foliations and regular Poisson structures}

\author{Victor Guillemin}\address{ Victor Guillemin, Department of Mathematics, Massachussets Institute of Technology, Cambridge MA, US, \it{e-mail: vwg@math.mit.edu}}
\author{Eva Miranda}\address{ Eva Miranda,
Departament de Matemàtica Aplicada I, Universitat Politècnica de Catalunya, Barcelona, Spain, \it{e-mail: eva.miranda@upc.edu}}
            \thanks{Eva Miranda is partly supported by the DGICYT/FEDER project MTM2009-07594: Estructuras Geometricas: Deformaciones, Singularidades y Geometria Integral. Ana Rita Pires {was} partly supported by a grant SFRH / BD / 21657 / 2005 of Fundação para a Ciência e Tecnologia. }
            \author{Ana Rita Pires} \address{Ana Rita Pires, Department of Mathematics, Massachussets Institute of Technology, Cambridge MA, US, \it{e-mail: arita@math.mit.edu}}

\dedicatory{{Dedicated to the memory of Paulette Libermann whose
cosymplectic manifolds play a fundamental role in this paper.}}

\date{\today}

\begin{abstract}
In this short note we give a complete characterization of a certain class of compact corank one Poisson manifolds, those equipped with a closed one-form defining the symplectic foliation and a closed two-form extending the symplectic form on each leaf. If such a manifold has a compact leaf, then all the leaves are compact, and furthermore the manifold is a mapping torus of a compact leaf.

These manifolds and their regular Poisson structures admit an extension as the critical hypersurface of a Poisson $b$-manifold as we will see in \cite{guimipi}.
\end{abstract}
\maketitle


\section{Introduction}

Given a regular Poisson structure we have an associated symplectic foliation $\cF$ given by the distribution of Hamiltonian vector fields. In this short paper we study some properties of codimension one symplectic foliations for regular Poisson manifolds and define some invariants associated to them.

In Section \ref{first invariant} we introduce the first invariant, associated to the defining one-form of the foliation. We will see in Section \ref{vanishing first} that this invariant measures how far a Poisson manifold is from being unimodular. When this invariant vanishes we can choose a defining one-form for the symplectic foliation which is closed. In particular when this invariant vanishes, the Godbillon-Vey class of the foliation vanishes too (as had been previously observed by Weinstein in \cite{Weinstein2}).

In Section \ref{second invariant}, we introduce another invariant also related to the global geometry of these manifolds.
The second invariant measures the obstruction to the existence of a closed two-form on the manifold that restricts to the symplectic structure on each leaf.
 This invariant had been previously studied by Gotay in \cite{gotay} in the setting of coisotropic
 embeddings. This invariant also has a nice interpretation for symplectic fiber bundles (see
 \cite{weinsteinetal}).



In section \ref{vanishing first and second}, we explore some of the global implications of the vanishing of these two invariants for compact manifolds. In particular we show that if they vanish and the foliation $\cF$ has at least one compact leaf
 $L$, then all leaves are compact and $M$ itself is the mapping torus associated with the holonomy map of $L$ onto itself. (In particular, the leaves of $\cF$ are the fibers of a fibration of $M$ over $\SSS^1$.)

In Section \ref{vanishing first and second Poisson}, we give the
Poisson version of this mapping torus theorem. These manifolds turn
out to be {\emph {cosymplectic manifolds} in the sense of Libermann
\cite{libermann}}\footnote{ {A cosymplectic manifold is manifold
$M^{2n+1}$ together with a closed one-form $\eta$ and a closed
two-form, $\omega$ such that $\eta\wedge\omega^n$ is a volume
form.}}.

We also briefly describe  another global consequence of the vanishing of these invariants: A $2n$-dimensional Poisson manifold $(M, \Pi)$, is a Poisson $b$-manifold if the section $\Pi^n$ of $\Lambda^n(M)$ intersects the zero section of this bundle transversally. For such manifolds it is easy to see that this intersection is a regular Poisson manifold with codimension one symplectic leaves and that its first and second invariants vanish.
A much harder result (which will be the topic of a sequel to this paper) is that the converse is also true.

As an application of theorem \ref{poissonmappingtorus} we give an explicit description of the Weinstein's groupoid integrating the Poisson structure of these regular manifolds in the case there is a compact leaf. We do it in section \ref{vanishing first and second algebroid}.

The main result of this paper (Theorem \ref{poissonmappingtorus})
{is related to} previous results of HongJun Li \cite{li} where a
symplectic version of Tischler's theorem \cite{tischler} for
cosymplectic manifolds is given. {Tischler's theorem asserts that a
compact manifold that admits a closed one-form is a fibration over a
circle, but not that this fibration is the one given by the
prescribed closed one-form.} Li's main theorem in \cite{li} asserts
that cosymplectic manifolds are equivalent to symplectic mapping
tori. The main difference with our result is that we work precisely
with the foliation given by the closed one-form (whose existence is
guaranteed by the vanishing of the first invariant) and with the
Poisson structure guaranteed by the vanishing of the second
invariant. The construction provided by Tischler and Li for this
foliation is purely topological  and does not {apply to} the
foliation given by the closed one-form.

{Theorems \ref{poissonmappingtorus} and \ref{further} relate to the
work of M\'{e}lanie Bertelson in \cite{be} because they provide
natural examples of regular foliated manifolds that admit a Poisson
structure. In \cite{be}, Bertelson studies topological obstructions
for a foliated manifold to admit regular Poisson structures and uses
the h-principle in the Poisson context to construct examples of such
structures. Our objects with vanishing first obstruction class may
provide examples of application of Bertelson's results. In
particular, these results ensure that the vanishing of just the
first invariant, together with some additional conditions related to
the defining one-form that it is associated to, implies that there
exists a Poisson structure.}

{\bf Acknowledgements} {We are grateful to the referees of this
paper for their useful comments and for drawing our attention to
many interesting papers in the literature on related matters. The
second author wishes to thank the organizers of the Conference
Poisson 2010 (where part of these results where presented) for their
hospitality and a perfect working atmosphere. Some of the results in
this paper were improved thanks to conversations with Rui Loja
Fernandes and David Martínez Torres. In particular, we are thankful
to David Martínez Torres for useful observations about the
symplectic groupoids integrating these manifolds, for a different
interpretation of the unimodularity condition of Theorem
\ref{thm:unimodular}, and for suggesting Example \ref{Davids
example}, and to Rui Loja Fernandes for pointing out the connection
of the vanishing of our second invariant with the results in his
paper with Marius Crainic \cite{marui2}.}
\section{Introducing two invariants of a foliation}\label{two invariants}

\subsection{The defining one-form of a foliation and the first obstruction class}\label{first invariant}

Let $M$ be a real manifold, $\cF$ a transversally orientable codimension one foliation of $M$, and write $\Omega(M)$ simply as $\Omega$.

\begin{definition}
A form $\alpha\in\Omega^1$ is a \textbf{defining one-form} of the foliation $\cF$ if it is nowhere vanishing and $i_L^*\alpha=0$ for all leaves $L\stackrel{i_L}\hookrightarrow M$.
\end{definition}

A basic property of this defining one-form is:

\begin{lemma}\label{lemma:mu1}
For $\mu\in\Omega^k$, we have $\mu\in\alpha\wedge\Omega^{k-1}$ if and only if $i^*_L\mu=0$ for all $L\in\cF$ .
\end{lemma}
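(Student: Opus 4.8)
The plan is to prove the two implications separately. The forward direction is immediate: if $\mu = \alpha \wedge \beta$ for some $\beta \in \Omega^{k-1}$, then naturality of pullback gives $i_L^* \mu = (i_L^* \alpha) \wedge (i_L^* \beta) = 0$ for every leaf $L$, since $i_L^* \alpha = 0$ by the definition of a defining one-form. The entire content of the lemma is therefore in the reverse direction.

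For the reverse direction, the first step is to reinterpret the hypothesis pointwise. Since $\alpha$ is nowhere vanishing, $\ker \alpha_p \subset T_p M$ is a codimension-one subspace at each $p$, and because $i_L^* \alpha = 0$ forces $T_p L \subseteq \ker \alpha_p$, a dimension count (the foliation has codimension one) shows $T_p L = \ker \alpha_p$. Hence the condition $i_L^* \mu = 0$ for all leaves is exactly the statement that $\mu_p$ vanishes whenever all its arguments lie in $\ker \alpha_p$, for every $p$. I would then invoke the standard linear-algebra division lemma: for a nonzero covector $\alpha_p$ and a $k$-covector $\mu_p$, one has $\mu_p|_{\ker \alpha_p} = 0$ if and only if $\mu_p = \alpha_p \wedge \beta_p$ for some $(k-1)$-covector $\beta_p$; this is checked at once by completing $\alpha_p$ to a basis of $T_p^* M$. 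Consequently $\alpha_p \wedge \mu_p = \alpha_p \wedge \alpha_p \wedge \beta_p = 0$ at every point, so that $\alpha \wedge \mu = 0$ as a global form.

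The hard part is the globalization, since the pointwise $\beta_p$ is not unique and need not vary smoothly, so one cannot simply assemble the $\beta_p$ into a smooth form. To produce a genuine smooth $\beta \in \Omega^{k-1}$ I would fix a Riemannian metric and set $X := \alpha^\sharp / \langle \alpha, \alpha \rangle$, a global smooth vector field satisfying $\alpha(X) = 1$ (here the nowhere-vanishing of $\alpha$ is essential). Defining $\beta := \iota_X \mu$ and using the contraction identity
\[
\iota_X(\alpha \wedge \mu) = \alpha(X)\,\mu - \alpha \wedge \iota_X \mu = \mu - \alpha \wedge \beta,
\]
the relation $\alpha \wedge \mu = 0$ established above gives $\mu = \alpha \wedge \beta$, completing the proof. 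The only delicate point to check is the sign and degree bookkeeping in the contraction identity, but the real geometric content lies in the choice of the transverse vector field $X$, which upgrades the merely pointwise division to a global one.
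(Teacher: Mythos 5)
Your proof is correct. The paper states this lemma without proof (as ``a basic property''), and your argument --- reduce the hypothesis to the pointwise statement that $\mu_p$ vanishes on $\ker\alpha_p=T_pL$, deduce $\alpha\wedge\mu=0$ by the linear-algebra division lemma, then globalize by choosing a transverse vector field $X$ with $\alpha(X)=1$ and applying the antiderivation identity $\iota_X(\alpha\wedge\mu)=\alpha(X)\,\mu-\alpha\wedge\iota_X\mu$ to get $\mu=\alpha\wedge\iota_X\mu$ --- is exactly the standard complete argument the authors are implicitly relying on. The sign and degree bookkeeping in the contraction identity is right, and your identification of the choice of $X$ as the step that upgrades pointwise divisibility to smooth divisibility is the correct emphasis.
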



In particular, because $i^*_L(d\alpha)=d(i^*_L\alpha)=0$ for all leaves $L\in\cF$, we have
\begin{equation}\label{equation:dalpha}
d\alpha=\beta\wedge\alpha \text{ for some }\beta\in\Omega^1.
\end{equation}

Thus, $\Omega_1=\alpha\wedge\Omega$ is a subcomplex of $\Omega$ and $\Omega_3=\Omega/\alpha\wedge\Omega$ a quotient complex. Although the spaces $\Omega_1$ and $\Omega_3$ are defined using $\alpha$, they do not depend on the particular choice of the defining one-form. Indeed, let $\alpha$ and $\alpha'$ be distinct defining one-forms for the foliation $\cF$, then we must have $\alpha'=f\alpha$ for some nonvanishing $f\in C^\infty(M)$, thus producing the same complexes $\Omega_1$ and $\Omega_3$. The statement of Lemma \ref{lemma:mu1} simply gives a necessary and sufficient condition for a $k$-form $\mu\in\Omega^k$ to be in $\Omega^k_1$.

Writing $\Omega_2=\Omega$ we have a short exact sequence of complexes
$$0\longrightarrow\Omega_1{\longrightarrow}\Omega_2\stackrel{j}{\longrightarrow}\Omega_3\longrightarrow0.$$

Note that although the form $\beta$ is not unique for a fixed choice of $\alpha$, the projection $j\beta$ is: two distinct forms $\beta$ and $\beta'$ must differ by a multiple of $\alpha$ (by a function). From (\ref{equation:dalpha}) we get $0=d(d\alpha)=d\beta\wedge\alpha-\beta\wedge\beta\wedge\alpha=d\beta\wedge\alpha$,
that is, $\alpha$ and $d\beta$ are dependent, so we must have $d\beta\in\Omega_1$. Then, $d(j\beta)=0$ and we can define the \textbf{first obstruction class} $c_\cF\in H^1(\Omega_3)$ to be
$$c_\cF=\left[j\beta\right].$$

\begin{proposition}
The first obstruction class $c_\cF$ does not depend on the choice of the defining one-form $\alpha$.
\end{proposition}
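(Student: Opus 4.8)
The plan is to reduce everything to the observation that two defining one-forms differ by a nowhere-vanishing function, and that the induced change in $\beta$ is exact in $\Omega_3$. Since the text has already established that, for a \emph{fixed} $\alpha$, the projection $j\beta$ is independent of the choice of $\beta$, it suffices to track what happens when $\alpha$ is replaced by another defining one-form $\alpha'$.

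First I would write $\alpha'=f\alpha$ with $f\in C^\infty(M)$ nowhere vanishing; this is forced, since $\alpha$ and $\alpha'$ are both nowhere-vanishing annihilators of the same codimension-one distribution. Differentiating and using $d\alpha=\beta\wedge\alpha$ from (\ref{equation:dalpha}) together with $\alpha=\tfrac{1}{f}\alpha'$ gives
\begin{equation*}
d\alpha' = df\wedge\alpha + f\,d\alpha = \left(\frac{df}{f} + \beta\right)\wedge\alpha'.
\end{equation*}
Hence a legitimate choice of $\beta'$ for $\alpha'$, i.e.\ a one-form satisfying $d\alpha'=\beta'\wedge\alpha'$, is
\begin{equation*}
\beta' = \beta + \frac{df}{f}.
\end{equation*}

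Next I would observe that because $f$ is nowhere vanishing, $\log|f|$ is a globally defined smooth function on $M$, so $\tfrac{df}{f}=d\log|f|$ is globally exact. Applying the projection $j$, which is a chain map, yields
\begin{equation*}
j\beta' = j\beta + j\big(d\log|f|\big) = j\beta + d\big(j\log|f|\big),
\end{equation*}
and therefore $[j\beta']=[j\beta]$ in $H^1(\Omega_3)$. Combined with the already-noted independence from the choice of $\beta$ for fixed $\alpha$, this shows that $c_\cF$ is well defined.

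The one point requiring care, and the step I would flag as the main obstacle, is the global exactness of the logarithmic term: it is essential that $f$ never vanishes, so that $\log|f|$ is a genuine element of $\Omega^0$ rather than only a locally defined one; otherwise $\tfrac{df}{f}$ would merely be closed and the argument would collapse to a statement about de Rham classes rather than exact forms. Everything else is a routine application of the Leibniz rule and the chain-map property of $j$.
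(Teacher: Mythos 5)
Your argument is correct and is essentially identical to the paper's own proof: both write $\alpha'=f\alpha$, compute $d\alpha'=\bigl(\tfrac{df}{f}+\beta\bigr)\wedge\alpha'$, and conclude $\beta'=\beta+d\log|f|$, so that $[j\beta']=[j\beta]$. Your additional remark that the nowhere-vanishing of $f$ is what makes $\log|f|$ globally defined (hence $\tfrac{df}{f}$ exact rather than merely closed) is a sensible point of care, though the paper leaves it implicit.
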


\begin{proof}
Let $\alpha$ and $\alpha'$ be distinct defining one-forms for the foliation $\cF$, then $\alpha'=f\alpha$ for some nonvanishing $f\in C^\infty(M)$. We have
$$d\alpha'=df\wedge\alpha+f d\alpha=df\wedge\alpha+f\beta\wedge\alpha=(\frac{d f}{f}+\beta)\wedge \alpha',$$
so $\beta'=d(\log \left|f\right|)+\beta$. Thus $[j\beta']=[j\beta]$.
\end{proof}

\begin{theorem}\label{thm: vanishing first obstruction class}
The first obstruction class $c_\cF$ vanishes identically if and only if we can chose $\alpha$ the defining one-form of the foliation $\cF$ to be closed.
\end{theorem}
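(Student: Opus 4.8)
The plan is to prove the two implications separately. The forward direction is immediate, and the substance lies in the reverse direction, where I must unwind what the vanishing of $c_\cF$ means at the level of the quotient complex $\Omega_3$ and then use this to build a closed defining one-form by hand.

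For the forward direction, suppose we may choose a defining one-form $\alpha$ that is closed. Then $d\alpha=0$, so in (\ref{equation:dalpha}) we may take $\beta=0$; hence $j\beta=0$ and $c_\cF=[j\beta]=0$. Since $c_\cF$ is independent of the choice of defining one-form by the preceding proposition, exhibiting one such choice suffices.

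For the reverse direction, I first translate the hypothesis $c_\cF=0$ into a concrete statement about $\beta$. Because there are no forms of negative degree, we have $\Omega_1^0=\alpha\wedge\Omega^{-1}=0$, so $\Omega_3^0=C^\infty(M)$, and the coboundaries in $H^1(\Omega_3)$ are exactly the classes $j(dg)$ with $g\in C^\infty(M)$. Thus $[j\beta]=0$ means $\beta-dg\in\Omega_1^1=\alpha\wedge\Omega^0$ for some $g$, i.e. there exist functions $g,h\in C^\infty(M)$ with
$$\beta=dg+h\alpha.$$
The key step is then to produce the closed defining one-form explicitly: I claim $\alpha'=e^{-g}\alpha$ works. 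It is nowhere vanishing (as $e^{-g}>0$) and satisfies $i_L^*\alpha'=e^{-g}|_L\, i_L^*\alpha=0$, so it is again a defining one-form. To check closedness, substitute $\beta=dg+h\alpha$ into (\ref{equation:dalpha}) to get $d\alpha=\beta\wedge\alpha=dg\wedge\alpha$, since $h\alpha\wedge\alpha=0$. Then
$$d\alpha'=d(e^{-g})\wedge\alpha+e^{-g}\,d\alpha=-e^{-g}\,dg\wedge\alpha+e^{-g}\,dg\wedge\alpha=0.$$

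The only real subtlety, and the step I would be most careful about, is the correct identification of the coboundaries of the quotient complex $\Omega_3$ in low degree, which is what converts the cohomological hypothesis into the usable equation $\beta=dg+h\alpha$. Once that is in hand, the construction $\alpha'=e^{-g}\alpha$ and its verification are a short computation. As a consistency check, this choice corresponds to $f=e^{-g}$ in the transformation rule $\beta'=d(\log|f|)+\beta$ from the proof of the preceding proposition, which forces $\beta'=h\alpha$ to be proportional to $\alpha'$ and hence $d\alpha'=\beta'\wedge\alpha'=0$.
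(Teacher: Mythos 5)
Your proposal is correct and follows essentially the same route as the paper's proof: translate $c_\cF=[j\beta]=0$ into $\beta=dg+h\alpha$ and rescale the defining one-form to $e^{-g}\alpha$, which the same short computation shows is closed. The only difference is that you spell out the forward implication and the identification of the coboundaries of $\Omega_3$ in degree one, both of which the paper leaves implicit.
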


\begin{proof}
The first obstruction class $c_\cF=[j\beta]$ vanishes identically if and only if $\beta=df+g\,\alpha$ for some $f,g\in C^\infty(M)$. Replacing $\alpha$ by $\alpha'=e^{-f}\alpha$ we get
\begin{align*}
d(e^{-f}\alpha) &=-e^{-f}df\wedge\alpha+e^{-f}d\alpha\\
&=-e^{-f}df\wedge\alpha+e^{-f}\beta\wedge\alpha\\
&=-e^{-f}df\wedge\alpha+e^{-f}df\wedge\alpha+e^{-f}g\,\alpha\wedge\alpha\\
&=0\,.
\end{align*}
\end{proof}

\subsection{The defining two-form of a foliation and the second obstruction class}\label{second invariant}

Assume now that $M$ is endowed with a regular corank one Poisson structure $\Pi$ and that $\cF$ is the corresponding foliation of $M$ by symplectic leaves. Furthermore, assume that the first obstruction class $c_\cF$ vanishes and therefore by Theorem \ref{thm: vanishing first obstruction class} the foliation is defined by a closed one-form $\alpha$. Later, in Theorem \ref{thm:unimodular}, we will show that this happens exactly when the foliation $\cF$ is unimodular.

\begin{definition}
A form $\omega\in\Omega^2$ is a \textbf{defining two-form} of the foliation $\cF$ induced by the Poisson structure $\Pi$ if $i_L^*\omega=\omega_L$ is the
symplectic form on each leaf $L\stackrel{i_L}\hookrightarrow M$.
\end{definition}


Using Lemma \ref{lemma:mu1} again, and because $i^*_L(d\omega)=d(i^*_L\omega)=d\omega_L=0$ for all leaves $L\in\cF$, we have
\begin{equation}\label{equation:domega}
d\omega=\mu\wedge\alpha \text{ for some }\mu\in\Omega^2.
\end{equation}

Again note that although the form $\mu$ is not unique for fixed choices of $\alpha$ and $\omega$, the projection $j\mu$ is. From (\ref{equation:domega}) and $\alpha$ being closed, we conclude that $d\mu\wedge\alpha=0$, that is $\alpha$ and $d\mu$ are dependent, and so $d\mu\in\Omega_1$. Then, $d(j\mu)=0$ and we can define the \textbf{second obstruction class} $\sigma_\cF\in H^2(\Omega_3)$ to be
$$\sigma_\cF=\left[j\mu\right].$$

\begin{proposition}
The second obstruction class $\sigma_\cF$ does not depend on the choice of the defining two-form $\omega$.
\end{proposition}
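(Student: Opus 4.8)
The plan is to compare the two defining two-forms leafwise and then transport the difference through the defining equation $d\omega=\mu\wedge\alpha$. The starting observation is that any two defining two-forms induce the \emph{same} symplectic form on every leaf, so that $i_L^*(\omega-\omega')=i_L^*\omega-i_L^*\omega'=\omega_L-\omega_L=0$ for all $L\in\cF$. Hence $\omega-\omega'$ is a $2$-form whose restriction to every leaf vanishes, which is precisely the hypothesis of Lemma \ref{lemma:mu1} with $k=2$.

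Applying Lemma \ref{lemma:mu1}, I would conclude that $\omega-\omega'\in\alpha\wedge\Omega^1=\Omega^2_1$, that is, there is a one-form $\gamma\in\Omega^1$ with $\omega-\omega'=\gamma\wedge\alpha$. The next step is to differentiate this relation, using that $\alpha$ is closed (this is where the vanishing of the first obstruction class enters). I would compute
\begin{equation*}
d\omega'=d\omega-d(\gamma\wedge\alpha)=\mu\wedge\alpha-d\gamma\wedge\alpha=(\mu-d\gamma)\wedge\alpha,
\end{equation*}
where the middle equality uses $d(\gamma\wedge\alpha)=d\gamma\wedge\alpha-\gamma\wedge d\alpha=d\gamma\wedge\alpha$. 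This exhibits $\mu'=\mu-d\gamma$ as an admissible companion two-form for $\omega'$ in equation (\ref{equation:domega}).

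Finally, since $j\colon\Omega_2\to\Omega_3$ is the quotient chain map and therefore commutes with the differential, I would obtain $j\mu'=j\mu-j(d\gamma)=j\mu-d(j\gamma)$, so that $[j\mu']=[j\mu]$ in $H^2(\Omega_3)$. Combined with the fact (noted in the paragraph preceding the statement) that $[j\mu]$ is already independent of the particular lift $\mu$ for a fixed $\omega$, this shows $\sigma_\cF$ is also independent of the choice of $\omega$, completing the argument.

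I do not expect a serious obstacle: the conceptual content is entirely in the first step, the reduction via Lemma \ref{lemma:mu1} of ``same symplectic form on each leaf'' to ``differ by an element of $\Omega_1$''. The only points requiring care are the sign bookkeeping in $d(\gamma\wedge\alpha)$ together with the explicit use of $d\alpha=0$, and the observation that $j$ is a chain map so that $j(d\gamma)=d(j\gamma)$.
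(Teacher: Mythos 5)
Your proposal is correct and follows essentially the same route as the paper's proof: both write the difference of the two defining two-forms as an element of $\alpha\wedge\Omega^1$ via Lemma \ref{lemma:mu1}, differentiate using $d\alpha=0$, and conclude that the companion forms $\mu$ and $\mu'$ differ by an exact term, so $[j\mu']=[j\mu]$. The only differences are cosmetic (the paper writes $\nu=\alpha\wedge\xi$ where you write $\gamma\wedge\alpha$, and it leaves the chain-map property of $j$ implicit).
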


\begin{proof}
Let $\omega$ and $\omega'$ be distinct defining two-forms for the foliation $\cF$. We must have $\omega'=\omega+\nu$ for some $\nu\in\Omega^2$ such that $i_L^*\nu=0$ for every leaf $L\in\cF$. Then, by Lemma \ref{lemma:mu1}, $\nu=\alpha\wedge\xi$ for some $\xi\in\Omega^1$ and we have:
\begin{align*}
d\omega'&= d\omega+d\nu\\
&=\mu\wedge\alpha-\alpha\wedge d\xi\\
&=(\mu+d\xi)\wedge\alpha.
\end{align*}
Thus, $\mu'=\mu+d\xi$ and $[j\mu']=[j\mu]$.
\end{proof}

\begin{theorem}\label{above}
The second obstruction class $\sigma_\cF$ vanishes identically if and only if we can chose $\omega$ the defining two-form of the foliation $\cF$ to be closed.
\end{theorem}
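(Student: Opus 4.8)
The plan is to mirror the proof of Theorem~\ref{thm: vanishing first obstruction class}, exploiting the freedom to modify the defining two-form $\omega$ by elements of $\alpha\wedge\Omega^1$ without disturbing its restriction to the leaves. The backward direction is immediate: if $\omega$ can be chosen closed, then $d\omega=0=\mu\wedge\alpha$, so we may take $\mu=0$, whence $j\mu=0$ and $\sigma_\cF=[j\mu]=0$.

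For the forward direction, I would first unwind what it means for $\sigma_\cF=[j\mu]$ to vanish in $H^2(\Omega_3)$. Since the differential on the quotient complex $\Omega_3=\Omega/\alpha\wedge\Omega$ is the one induced by $d$, vanishing of $[j\mu]$ means $j\mu=d(j\xi)=j(d\xi)$ for some $\xi\in\Omega^1$; that is, $\mu-d\xi$ lies in $\Omega_1$, so by Lemma~\ref{lemma:mu1} we can write
$$\mu=d\xi+\alpha\wedge\gamma$$
for some $\gamma\in\Omega^1$. The idea is then to absorb the exact part $d\xi$ into a redefinition of $\omega$. Following the computation in the proposition preceding the theorem, replacing $\omega$ by $\omega'=\omega-\alpha\wedge\xi$ leaves the leafwise restrictions unchanged, because $i_L^*(\alpha\wedge\xi)=i_L^*\alpha\wedge i_L^*\xi=0$, so $\omega'$ is again a defining two-form; and it shifts $\mu$ to $\mu'=\mu-d\xi=\alpha\wedge\gamma$.

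Finally I would check that this choice makes $\omega'$ closed: from \eqref{equation:domega},
$$d\omega'=\mu'\wedge\alpha=(\alpha\wedge\gamma)\wedge\alpha=0,$$
the last equality holding because $\alpha\wedge\alpha=0$. This completes the argument.

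The step I expect to require the most care is the cohomological bookkeeping in the quotient complex, namely correctly identifying that vanishing of $[j\mu]$ produces a decomposition $\mu=d\xi+\alpha\wedge\gamma$ with the exact piece living upstairs in $\Omega^1$, together with verifying that the compensating modification $\alpha\wedge\xi$ of $\omega$ is precisely the one that both preserves the defining-two-form property and cancels $d\xi$ in \eqref{equation:domega}. The concluding cancellation $(\alpha\wedge\gamma)\wedge\alpha=0$ plays the role that the $e^{-f}$ rescaling did in Theorem~\ref{thm: vanishing first obstruction class}, and is where the whole mechanism pays off; beyond the quotient-complex argument I do not anticipate a genuine obstacle, as the result is a formal analogue of the first-obstruction case.
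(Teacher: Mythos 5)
Your proposal follows exactly the paper's route: backward direction by taking $\mu=0$ when $d\omega=0$, forward direction by writing $\mu=d\xi+\alpha\wedge\gamma$ and absorbing the exact part into a modification of $\omega$ by a term in $\alpha\wedge\Omega^1$. The only problem is a sign in the correction term, and it is not cosmetic: with your choice $\omega'=\omega-\alpha\wedge\xi$ and $\alpha$ closed one has
$$d\omega'=d\omega-d(\alpha\wedge\xi)=\mu\wedge\alpha+\alpha\wedge d\xi=(\mu+d\xi)\wedge\alpha=2\,d\xi\wedge\alpha,$$
since $\alpha\wedge d\xi=d\xi\wedge\alpha$ for a $1$-form against a $2$-form; so the exact piece doubles instead of cancelling, and $\omega'$ is not closed in general. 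The fix is to flip the sign: take $\omega'=\omega+\alpha\wedge\xi=\omega-\xi\wedge\alpha$, which still restricts to $\omega_L$ on every leaf and gives $d\omega'=(\mu-d\xi)\wedge\alpha=(\alpha\wedge\gamma)\wedge\alpha=0$. This is precisely the paper's choice $\omega'=\omega-\nu\wedge\alpha$ with $\nu$ playing the role of your $\xi$. Everything else in your argument (the identification of vanishing of $[j\mu]$ in the quotient complex with the decomposition $\mu=d\xi+\alpha\wedge\gamma$, and the use of Lemma \ref{lemma:mu1}) is correct and matches the paper.
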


\begin{proof}
The second obstruction class $\sigma_\cF=[j\mu]$ vanishes identically if and only if $\mu=d\nu+\gamma\wedge\alpha$ for some $\nu,\gamma\in\Omega^1$. In this case, we have $d\omega=d\nu\wedge\alpha$, and so the two-form $\omega'=\omega-\nu\wedge\alpha$ will be closed, with its restriction to an arbitrary leaf L still being $\omega_L$. Conversely, if $\omega$ is closed, then $\mu\wedge\alpha=d\omega=0$, the forms $\mu$ and $\alpha$ are dependent, and $[j\mu]=0$.
\end{proof}

{We will see in Section \ref{section: vanishing} that under certain
conditions, a foliation on $M$ with vanishing first and second
invariants is in fact a fibration, and in that case we can
re-interpret Theorem \ref{above} above in the light of Theorem 1 in
\cite{weinsteinetal}, as characterizing the extension of a closed
two-form on a fiber to a closed two-form on the total manifold.}

\subsection{The modular vector field and modular class of a Poisson manifold}

We follow Weinstein \cite{Weinstein2} for the description of modular vector field and the modular class of a Poisson manifold. A complete presentation of these can also be found in \cite{kosman}.

The modular vector field of a Poisson manifold measures how far Hamiltonian vector fields are from preserving a given volume form. A simple example is that of a symplectic manifold endowed with the volume form that is the top power of its symplectic form: the modular vector field will be zero because this volume form is invariant under the flow of any Hamiltonian vector field.

\begin{definition}
Let $(M,\Pi)$ be a Poisson manifold and $\Theta$ a volume form on it, and denote by $u_f$ the Hamiltonian vector field associated to a smooth function $f$ on $M$. The \textbf{modular vector field} $X_{\Pi}^{\Theta}$ is the derivation given by the mapping
$$f\mapsto \frac{\cL_{u_f}\Theta}{\Theta}.$$
\end{definition}

When both the Poisson structure and the volume form on $M$ are implicit, we denote the modular vector field by $\vmod$.

The modular vector field has the following properties \cite{Weinstein2}:
\begin{enumerate}
\item
The flow of the modular vector field preserves the volume and the Poisson structure:
$$\cL_{X_{\Pi}^{\Theta}}(\Pi)=0, \quad\quad \cL_{X_{\Pi}^{\Theta}}(\Theta)=0;$$

\item
When we change the volume form, the modular vector field changes by:
$$X_{\Pi}^{H\Theta}=X_{\Pi}^{\Theta}- u_{log(H)}$$
where $H\in C^\infty(M)$ is nonvaninshing and positive (we are assuming that our manifold is oriented and
transversally oriented). This implies that the class of the modular
vector field in the first Poisson cohomology group is independent of
the volume form chosen. This class is called the \textbf{modular
class} of the Poisson manifold. A Poisson manifold is called
\textbf{unimodular} if this cohomology class is zero. As remarked
above, symplectic manifolds are unimodular.

\item
If $M$ is 2-dimensional with Poisson structure given by the bracket
$\{x,y\}=f(x,y)$ and volume form $\Theta= dx\wedge dy$, then the
modular vector field is $u_f$, the Hamiltonian vector field
associated to $f$ with respect to the standard
Poisson structure $\Theta^{\sharp}$. In particular, the modular
vector field is tangent to the zero level set of $f$.

\end{enumerate}

\section{The case with vanishing invariants}\label{section: vanishing}

\subsection{Vanishing first invariant: unimodular Poisson manifolds}\label{vanishing first}

Assume again that $(M^{2n+1},\Pi)$ is a corank one Poisson manifold
and $\cF$ is its foliation by symplectic leaves. Fix
$\alpha\in\Omega^1(M)$ and $\omega\in\Omega^2(M)$ defining one- and
two-forms of the foliation, respectively. Let us
compute the modular vector field associated to the volume form
$\Theta=\alpha\wedge\omega^n$:

$$\begin{array}{rcl}
\cL_{u_f}\Theta&=&d(\iota_{u_f}\Theta)\\
&=&d(\iota_{u_f}\alpha\wedge\omega^n)\\
&=&-n\,d(\alpha\wedge df\wedge\omega^{n-1})\\
&=&n\,\alpha\wedge df\wedge\beta\wedge\omega^{n-1},
\end{array}$$
where $\beta\in\Omega^1(M)$ is such that $d\alpha=\alpha\wedge\beta$. On the last step we use the fact that $d\omega\wedge\alpha=0$, from equation \ref{equation:domega}.

By definition of modular vector field, we have
\begin{equation}\label{eq:modular vector field computation 1}
n\,\alpha\wedge df\wedge\beta\wedge\omega^{n-1}=\left(\iota_{v_\text{mod}} df\right) \alpha\wedge\omega^n.
\end{equation}

Furthermore, $v_\text{mod}$ is tangent to the leaves $L$ of $\cF$,\footnote{See for instance section 5 in \cite{Weinstein2}.} so
(\ref{eq:modular vector field computation 1}) implies that
\begin{equation}\label{eq:modular vector field computation 2}
n\,df_L\wedge\beta_L\wedge\omega_L^{n-1}=(\iota_{v_\text{mod}}df_L)\omega_L^n,
\end{equation}
where as before $\omega_L=i_L^*\omega$, and similarly $f_L=i_L^*f$ and $\beta_L=i_L^*\beta$.

On the other hand, because $(df_L\wedge\omega_L^n)$ is a $(2n+1)$-form on a $(2n)$-dimensional manifold $L$, we have
$$\iota_{v_\text{mod}}(df_L\wedge\omega_L^n)=0$$
and hence
\begin{equation}\label{eq:modular vector field computation 3}
(\iota_{v_\text{mod}}df_L)\omega_L^n=df_L\wedge n\,\iota_{v_\text{mod}}\omega_L\wedge\omega_L^{n-1}.
\end{equation}

From (\ref{eq:modular vector field computation 2}) and (\ref{eq:modular vector field computation 3}) we conclude that for all $f_L\in C^\infty(L)$,
$$\omega_L^{n-1}\wedge df_L\wedge(\iota_{v_\text{mod}}\omega_L-\beta_L)=0.$$
This, together with the fact that for a symplectic form $\omega$ and a two-form $\mu$, we have $\omega^{n-1}\wedge\mu=0$ if and only if $\mu=0$, implies the following:

\begin{theorem}\label{thm:weinsteinmod}
Consider the Poisson manifold $(M^{2n+1},\Pi)$ with volume form $\Theta=\alpha\wedge\omega^n$, where $\alpha$ and $\omega$ are defining one- and two-forms of the induced foliation.

Then, the modular vector field is the vector field which on every symplectic leaf $L\in\cF$ satisfies
$$\iota_{v_\text{mod}}\omega_L=\beta_L,$$
where $\beta_L=i^*_L\beta$ is such that $d\alpha=\beta\wedge\alpha$.
\end{theorem}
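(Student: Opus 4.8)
The plan is to compute the Lie derivative $\cL_{u_f}\Theta$ directly for the specific volume form $\Theta=\alpha\wedge\omega^n$ and then read off $\vmod$ by comparing the result with the defining relation $\cL_{u_f}\Theta=(\vmod f)\,\Theta$. Since $\Theta$ is a top-degree form it is automatically closed, so Cartan's formula collapses to $\cL_{u_f}\Theta=d\,\iota_{u_f}\Theta$, and the entire argument becomes an exercise in exterior algebra organized around the two structural identities $d\alpha=\beta\wedge\alpha$ and $d\omega=\mu\wedge\alpha$ from (\ref{equation:dalpha}) and (\ref{equation:domega}).

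First I would evaluate $\iota_{u_f}\Theta$. Because $u_f$ is Hamiltonian it is tangent to the symplectic leaves, so $\iota_{u_f}\alpha=0$, and on each leaf $\iota_{u_f}\omega$ coincides with $df$ up to multiples of $\alpha$. Hence only one term survives, giving $\iota_{u_f}(\alpha\wedge\omega^n)=-n\,\alpha\wedge df\wedge\omega^{n-1}$. Differentiating and invoking $d\alpha=\beta\wedge\alpha$, together with $d\omega\wedge\alpha=0$ (which annihilates the term carrying $d(\omega^{n-1})$, since it acquires a factor $\alpha\wedge\alpha$), produces $\cL_{u_f}\Theta=n\,\alpha\wedge df\wedge\beta\wedge\omega^{n-1}$. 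Equating this with $(\iota_{\vmod}df)\,\alpha\wedge\omega^n$ yields the global identity (\ref{eq:modular vector field computation 1}).

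Next I would pass to a fixed leaf $L$. Here I rely on the cited fact that $\vmod$ is itself tangent to the leaves, so pulling back along $i_L$ and removing the common factor $\alpha$ converts (\ref{eq:modular vector field computation 1}) into the leafwise relation (\ref{eq:modular vector field computation 2}). To turn the scalar $\iota_{\vmod}df_L$ into a wedge expression I would note that $df_L\wedge\omega_L^n$ is a $(2n+1)$-form on the $2n$-dimensional leaf $L$, hence killed by $\iota_{\vmod}$; expanding $\iota_{\vmod}(df_L\wedge\omega_L^n)=0$ gives (\ref{eq:modular vector field computation 3}). Subtracting the two relations leaves $\omega_L^{n-1}\wedge df_L\wedge(\iota_{\vmod}\omega_L-\beta_L)=0$ for every $f$.

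The decisive step is the final nondegeneracy argument: since $\omega_L^{n-1}\wedge\eta=0$ forces a two-form $\eta$ on $L$ to vanish, I conclude $df_L\wedge(\iota_{\vmod}\omega_L-\beta_L)=0$, and then letting $df_L$ range over all locally exact one-forms forces $\iota_{\vmod}\omega_L=\beta_L$. I expect the main difficulty to be bookkeeping rather than conceptual: one must keep careful track of which interior products vanish because $u_f$ and $\vmod$ are leaf-tangent, and justify replacing the global one-form $\iota_{u_f}\omega$ by $df$ inside the wedge, a substitution valid only modulo $\alpha$. This is precisely why every term that could detect the discrepancy is multiplied by $\alpha$ and dies; once the two structural identities are used to discard the $\alpha\wedge\alpha$ contributions, the conclusion follows at once from the nondegeneracy of $\omega_L$.
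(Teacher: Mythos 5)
Your proposal is correct and follows the paper's own proof essentially line for line: the same computation of $\cL_{u_f}\Theta=d\iota_{u_f}\Theta$ using $d\alpha=\beta\wedge\alpha$ and $d\omega\wedge\alpha=0$, the same restriction to a leaf via tangency of $\vmod$, the same expansion of $\iota_{\vmod}(df_L\wedge\omega_L^n)=0$, and the same concluding nondegeneracy argument. No substantive differences to report.
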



A corollary of Theorem \ref{thm:weinsteinmod} is the following criterion for unimodularity\footnote{We thank David Martínez-Torres for pointing out that an alternative proof of this can be obtained via formula (4.6) in \cite{kosmannlaurentweinstein}: this alternative proof requires the use of the interpretation of the modular class as one-dimensional representations, in what seems to be an interesting but longer path. See also \cite{evensluweinstein}.} of corank one Poisson manifolds:

\begin{theorem}\label{thm:unimodular}
A corank one Poisson manifold $(M,\Pi)$ with induced symplectic foliation $\cF$ is unimodular if and only if the first obstruction class of the foliation $c_\cF$ vanishes identically.
\end{theorem}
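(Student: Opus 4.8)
The plan is to use Theorem \ref{thm:weinsteinmod} to translate the vanishing of the modular class into the vanishing of $c_\cF$, exploiting the freedom to change the volume form. Recall from Theorem \ref{thm:weinsteinmod} that for the distinguished volume form $\Theta=\alpha\wedge\omega^n$ the modular vector field $\vmod$ is characterized leafwise by $\iota_{\vmod}\omega_L=\beta_L$, where $\beta$ is the one-form with $d\alpha=\beta\wedge\alpha$ and $\beta_L=i_L^*\beta$. By property (2) of the modular vector field, any other volume form $H\Theta$ changes the modular vector field by the Hamiltonian vector field $u_{\log H}$, and all such representatives lie in the same modular class in the first Poisson cohomology. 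So unimodularity means precisely that $\vmod^\Theta = u_g$ for some global $g\in C^\infty(M)$.

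First I would unwind what $\vmod^\Theta = u_g$ means leafwise. The Hamiltonian vector field $u_g$ is by definition the leafwise symplectic gradient, so $\iota_{u_g}\omega_L = dg_L = i_L^*(dg)$. Combining this with the characterization $\iota_{\vmod}\omega_L=\beta_L$ from Theorem \ref{thm:weinsteinmod}, unimodularity is equivalent to the existence of a global function $g$ with $\beta_L = dg_L$ on every leaf, i.e. $i_L^*(\beta - dg)=0$ for all $L\in\cF$. By Lemma \ref{lemma:mu1}, this says exactly that $\beta - dg \in \alpha\wedge\Omega^0 = \Omega_1^1$, that is $\beta = dg + h\,\alpha$ for some $h\in C^\infty(M)$. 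But this is precisely the condition identified in the proof of Theorem \ref{thm: vanishing first obstruction class} for the class $c_\cF=[j\beta]$ to vanish. Thus the two conditions match term-for-term, giving both implications at once.

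The one genuine subtlety, which I expect to be the main obstacle, is the passage between the global statement (existence of a single $g\in C^\infty(M)$ representing the modular class) and the leafwise statement extracted from Theorem \ref{thm:weinsteinmod}. One must check that $\vmod^\Theta$ being globally Hamiltonian is equivalent to its leafwise restrictions being leafwise-Hamiltonian \emph{via restrictions of a single global function}; a priori the leaves might each carry a Hamiltonian generator that fails to patch into a global $g$. Here I would use that $\Omega_3 = \Omega/\alpha\wedge\Omega$ computes leafwise data, so the obstruction $[j\beta]\in H^1(\Omega_3)$ is itself the correct global object: a global $g$ with $\beta - dg \in \Omega_1$ exists if and only if $[j\beta]=0$, and this $g$ simultaneously trivializes all the leafwise conditions. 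In other words, the quotient complex $\Omega_3$ is exactly engineered so that leafwise exactness of $j\beta$ is controlled by one global cohomology class, dissolving the patching worry.

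In summary, the argument runs: unimodularity $\iff$ $\vmod^\Theta$ is globally Hamiltonian $\iff$ $\beta = dg + h\,\alpha$ for some $g,h\in C^\infty(M)$ $\iff$ $[j\beta]=c_\cF$ vanishes, where the middle equivalence uses Theorem \ref{thm:weinsteinmod} together with Lemma \ref{lemma:mu1}, and the last is the computation in the proof of Theorem \ref{thm: vanishing first obstruction class}. I would present it as a short chain of equivalences rather than two separate implications, since each step is reversible.
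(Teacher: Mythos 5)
Your proof is correct and follows essentially the same route as the paper: both arguments rest on Theorem \ref{thm:weinsteinmod} combined with Lemma \ref{lemma:mu1} and the computation behind Theorem \ref{thm: vanishing first obstruction class}. The only difference is bookkeeping: the paper absorbs the volume-form freedom into a rescaling of $\alpha$ and asks when the modular vector field of $\alpha\wedge\omega^n$ vanishes outright, whereas you fix $\Theta=\alpha\wedge\omega^n$ and allow the Hamiltonian correction $u_g$, matching $\beta=dg+h\,\alpha$ to $[j\beta]=0$ directly --- a clean, equivalent packaging.
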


\begin{proof}
Recall that $(M,\Pi)$ is unimodular if and only if there exists a volume form on it such that the corresponding modular vector field is zero. Also, recall from Theorem \ref{thm: vanishing first obstruction class} that $c_\cF$ vanishes identically if and only if there exists a closed defining one-form $\alpha$.

But by Theorem \ref{thm:weinsteinmod}, the modular vector field $\vmod=X^{\alpha\wedge\omega^n}_\Pi$ is zero if and only if $\beta_L=i_L^*\beta=0$ for every leaf $L\in\cF$, which by Lemma \ref{lemma:mu1} is equivalent to $\beta\in\alpha\wedge\Omega$, thus making $\alpha$ closed since $d\alpha=\beta\wedge\alpha.$
\end{proof}

\begin{remark}
Given a transversally orientable foliation with defining one-form
$\alpha$, the Godbillon-Vey class is defined as the class of the
3-form $\beta\wedge d\beta$. From Theorem \ref{thm:unimodular} we
deduce in  particular that for unimodular Poisson manifolds this
3-form vanishes. The converse is not true as observed by Weinstein
in \cite{Weinstein2}.
\end{remark}

\subsection{Vanishing first invariant: a topological result}\label{vanishing first and second}

We begin by recalling Reeb's global stability theorem about codimension one foliations:

\begin{theorem}\label{theorem: reeb}\cite{Reeb}
Let $\cF$ be a transversely orientable codimension one foliation of a compact connected manifold $M$. If $\cF$ contains a compact leaf $L$ with finite fundamental group, then every leaf of $\cF$ is diffeomorphic to $L$.

Furthermore, $M$ is the total space of a fibration $f:M\to\SSS^1$ with fiber $L$, and $\cF$ is the fiber foliation $\{f^{-1}(\theta)|\theta\in\SSS^1\}$.
\end{theorem}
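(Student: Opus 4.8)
The plan is to follow the classical route to Reeb's global stability theorem: first extract from the finiteness of $\pi_1(L)$ and transverse orientability that the compact leaf $L$ has \emph{trivial} holonomy, and then propagate the resulting product structure across all of $M$. For the holonomy step, recall that the holonomy homomorphism carries $\pi_1(L)$ into the group of germs at $0$ of diffeomorphisms of a transversal $(\RR,0)$; transverse orientability forces these germs to be orientation-preserving, and finiteness of $\pi_1(L)$ makes the image a finite group of such germs. An orientation-preserving germ $\phi$ fixing $0$ of finite order must be the identity: since $\phi'(0)>0$ and $(\phi'(0))^k=1$ we get $\phi'(0)=1$, and if $\phi\neq\mathrm{id}$ the first nonvanishing higher Taylor coefficient would grow under iteration, contradicting finite order. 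Hence the holonomy of $L$ is trivial.

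Next I would invoke Reeb's local stability theorem: a compact leaf with trivial holonomy admits a saturated tubular neighborhood $U\cong L\times(-\epsilon,\epsilon)$ on which $\cF$ is the product foliation by the slices $L\times\{t\}$, each compact, diffeomorphic to $L$, and again with trivial holonomy. This shows that the set $V\subset M$ swept out by compact leaves diffeomorphic to $L$ is open and saturated. The main obstacle is to prove that $V$ is also \emph{closed}. Given leaves $L_n\cong L$ through points $p_n\to p\in L_\infty$, one must show the limit leaf $L_\infty$ is itself compact with finite (hence trivial) holonomy, so that local stability applies at $L_\infty$ as well; this is the delicate completeness argument of Reeb's proof, where the product neighborhoods of the $L_n$ together with compactness of $M$ are used to prevent the limiting leaves from becoming noncompact or spiralling. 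Granting closedness, connectedness of $M$ forces $V=M$, which yields the first assertion that every leaf is diffeomorphic to $L$.

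Finally I would upgrade this to the fibration statement via the leaf space. Since every leaf is now compact with trivial holonomy, each has a two-sided product neighborhood, so the leaf space $B=M/\cF$ is a $1$-dimensional manifold; triviality of holonomy together with compactness of the leaves makes $B$ Hausdorff, and the two-sidedness of the product neighborhoods excludes boundary points. As $M$ is compact and connected, $B$ is a compact connected $1$-manifold without boundary, hence $B\cong\SSS^1$. The quotient map $f:M\to B=\SSS^1$ is then a proper submersion whose fibers are exactly the leaves of $\cF$, so by Ehresmann's fibration theorem it is a locally trivial bundle with fiber $L$, giving precisely the fibration and fiber foliation in the statement. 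I expect the Hausdorffness of $B$ and the closedness step above to be the two points demanding the most care; the remainder is a formal consequence of Reeb's local stability and Ehresmann's theorem.
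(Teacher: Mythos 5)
The paper does not actually prove this statement: it is quoted as Reeb's global stability theorem, with references to \cite{Reeb} and \cite{camacho}, and the only proof-related remark in the text is that the key point is that the compact leaf has trivial holonomy. Your outline follows exactly the classical route of those references, and your identification of trivial holonomy as the crux is precisely the point the authors single out, so in structure you are aligned with what the paper relies on.

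As a self-contained proof, however, your write-up has one genuine gap and one smaller flaw. The gap is the step you explicitly defer: showing that the saturated set $V$ of compact leaves diffeomorphic to $L$ is closed. Openness is just Reeb's local stability theorem; closedness is the substantive content of the global theorem, and writing ``granting closedness'' leaves it unproved. What is needed is an argument that a leaf $L_\infty$ through a limit point of $V$ is itself compact --- for instance, by fixing a finite distinguished cover of $M$, observing that the number of plaques of a leaf of $V$ in this cover is locally constant on $V$ (the leaves of $V$ near a given one are the slices of its product neighborhood), hence uniformly bounded, and deducing that $L_\infty$ is a union of boundedly many plaques and therefore compact with trivial holonomy; this is carried out in \cite{camacho}. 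The smaller flaw is in your claim that a finite-order orientation-preserving germ $\phi$ of $(\RR,0)$ is the identity: the Taylor-coefficient argument says nothing about germs tangent to the identity to infinite order (flat germs), which need not be trivial. The correct elementary argument is by monotonicity: if $\phi(x)>x$ for some $x>0$ near $0$, then $\phi^k(x)>\phi^{k-1}(x)>\dots>x$ for all $k$, contradicting $\phi^k=\mathrm{id}$, and symmetrically for $\phi(x)<x$ and for $x<0$; hence $\phi=\mathrm{id}$ as a germ. With these two points repaired, the rest of your argument (local stability, the leaf space being a compact connected $1$-manifold, hence $\SSS^1$, and local triviality of the quotient map) is correct and standard.
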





The key point in the proof of Theorem \ref{theorem: reeb} is that such a foliation with a compact leaf has trivial holonomy (see for example \cite{camacho}). Since a foliation defined by a closed one-form has trivial holonomy as well (see again \cite{camacho}, p.80), the proof and conclusions of Reeb's theorem hold in the following case:

\begin{theorem}\label{globalcompact}
Let $\cF$ be a transversely orientable codimension one foliation of a compact connected manifold $M$ with $c_\cF=0$. If $\cF$ contains a compact leaf $L$, then every leaf of $\cF$ is diffeomorphic to $L$.

Furthermore, $M$ is the total space of a fibration $f:M\to\SSS^1$ with fiber $L$, and $\cF$ is the fiber foliation $\{f^{-1}(\theta)|\theta\in\SSS^1\}$.
\end{theorem}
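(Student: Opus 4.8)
The plan is to reduce Theorem \ref{globalcompact} to Reeb's global stability theorem (Theorem \ref{theorem: reeb}) by replacing the hypothesis ``$L$ has finite fundamental group'' with the hypothesis ``$c_\cF = 0$''. The only place where the finiteness of $\pi_1(L)$ is used in the proof of Reeb's theorem is to guarantee that the holonomy of the compact leaf $L$ is trivial: a compact leaf with finite fundamental group admits no nontrivial holonomy representation into the germs of diffeomorphisms of a transversal, and triviality of the holonomy is precisely what propagates the local product structure globally and forces all nearby leaves to be diffeomorphic compact copies of $L$. So the real content is to show that our hypothesis $c_\cF = 0$ supplies the same conclusion, namely that the foliation has trivial holonomy.

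First I would invoke Theorem \ref{thm: vanishing first obstruction class}: since $c_\cF = 0$, the foliation $\cF$ is defined by a \emph{closed} one-form $\alpha$. Next I would argue that a codimension one foliation defined by a closed one-form automatically has trivial holonomy. The intuition is that a closed one-form can be integrated locally to a submersion $M \to \RR$ whose level sets are the leaves, and the holonomy of a leaf $L$ is computed by following nearby leaves around a loop in $L$; because the transverse coordinate is (locally) the value of a closed form, parallel transport along any loop returns to the same transverse value, so the holonomy germ is the identity. This is a standard fact, and indeed the paper flags it explicitly by citing \cite{camacho}, p.80, so I would cite that reference rather than reprove it.

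With trivial holonomy in hand, the remainder of the argument is verbatim Reeb: a compact leaf $L$ with trivial holonomy has a saturated tubular neighborhood diffeomorphic to $L \times (-\delta_L,\delta_L)$ foliated by the slices $L \times \{t\}$, all of which are compact and diffeomorphic to $L$. One then shows the set of points lying on compact leaves diffeomorphic to $L$ is open (by the tubular neighborhood description) and closed (a limit of such leaves is again compact with trivial holonomy, hence sits in such a neighborhood); by connectedness of $M$ this set is all of $M$, so every leaf is diffeomorphic to $L$. Finally, gluing the local transversal coordinates — each given locally by $\alpha$ — produces a globally defined fibration $f : M \to \SSS^1$ whose fibers are the leaves. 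Here one uses compactness of $M$ to ensure the period of $\alpha$ over the circle of transversals is finite and to rescale $\alpha$ so that $f$ lands in $\SSS^1 = \RR/\ZZ$.

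The main obstacle, and the only step requiring genuine care, is establishing that the closed-one-form hypothesis yields trivial holonomy; everything downstream is a routine adaptation of the Reeb argument. Since the paper explicitly cites \cite{camacho} for both the Reeb mechanism and the triviality of holonomy for foliations given by a closed one-form, I expect the intended proof to be very short: apply Theorem \ref{thm: vanishing first obstruction class} to obtain a closed defining one-form, cite \cite{camacho} for the triviality of its holonomy, and then observe that the proof of Reeb's theorem goes through unchanged under ``trivial holonomy'' in place of ``finite fundamental group''.
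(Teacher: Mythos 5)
Your proposal is correct and matches the paper's own argument essentially verbatim: the paper proves Theorem \ref{globalcompact} precisely by noting that the only role of finite fundamental group in Reeb's theorem is to force trivial holonomy of the compact leaf, and that a foliation defined by a closed one-form (available here by Theorem \ref{thm: vanishing first obstruction class} since $c_\cF=0$) has trivial holonomy, citing \cite{camacho}, p.~80. The paper additionally sketches an alternative, Reeb-free argument in Proposition \ref{alternative}, but that is presented as a supplement rather than the main proof.
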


\begin{remark}
A theorem of Tischler \cite{tischler} says that a compact manifold endowed with a non-vanishing closed one-form must be a fibration over a circle. However, this fibration need not coincide with the codimension one foliation defined by the closed one-form. Theorem \ref{globalcompact} asserts that when the foliation contains a compact leaf, then it is itself a fibration over a circle.
\end{remark}

We outline an alternative proof of Theorem \ref{globalcompact} that does not use Reeb's theorem (or rather, does not use the aforementioned key point of its proof):

\begin{proposition}\label{alternative}
Let $\cF$ be a transversely orientable codimension one foliation of
a connected manifold $M$ with $c_\cF=0$. Then:
\begin{enumerate}
\item there exists a nontrivial family of diffeomorphisms $\Phi_t:M\to M$, defined for $t\in(-\varepsilon,\varepsilon)$, that takes leaves to leaves;
\item if $\cF$ contains a compact leaf $L$, then all leaves are compact;
\item and furthermore there exists a saturated neighbourhood $\cU$ of $L$ and a projection $f:\cU\to I$ such that the foliation is diffeomorphic to the foliation given by the fibers of $p$.
\end{enumerate}
\end{proposition}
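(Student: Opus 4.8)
The plan is to exploit the vanishing of $c_\cF$ to replace the defining one-form by a closed one, and then to realize the required diffeomorphisms as the flow of a suitably normalized transverse vector field. First I would invoke Theorem \ref{thm: vanishing first obstruction class}: since $c_\cF=0$, the foliation admits a \emph{closed} defining one-form $\alpha$. Fixing an auxiliary Riemannian metric $g$ on $M$, let $\alpha^\sharp$ be the metric dual of $\alpha$ and set $X=\alpha^\sharp/g(\alpha^\sharp,\alpha^\sharp)$. This is well defined and nowhere zero because $\alpha$ is nowhere vanishing, and it satisfies $\alpha(X)=1$; in particular $X$ is everywhere transverse to $\cF$, since $\alpha$ annihilates exactly the tangent spaces to the leaves.

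For (1) I would take $\Phi_t$ to be the (local) flow of $X$. The key point is that $\Phi_t$ preserves $\alpha$: by Cartan's formula together with the closedness of $\alpha$,
$$\cL_X\alpha=\iota_X d\alpha+d(\iota_X\alpha)=d(\alpha(X))=d(1)=0,$$
so $\Phi_t^*\alpha=\alpha$. Hence $\Phi_t$ preserves the distribution $\ker\alpha=T\cF$ and therefore carries leaves to leaves. The family is nontrivial because $\frac{d}{dt}\Phi_t=X\circ\Phi_t$ and $X\neq 0$. When $M$ is compact, $X$ is complete and $\Phi_t$ is a diffeomorphism of $M$ for all $t$; in general the local flow is defined for $t\in(-\varepsilon,\varepsilon)$ on any relatively compact region, which is all that is needed below.

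For (3), which I would prove before (2) since (2) relies on it, suppose $L$ is a compact leaf. Compactness of $L$ yields a uniform existence time, so $\Psi(\ell,t):=\Phi_t(\ell)$ is defined on $L\times(-\varepsilon,\varepsilon)$; since $X$ is transverse to $L$, it is a diffeomorphism onto a saturated neighbourhood $\cU$ of $L$, under which $\cF|_{\cU}$ becomes the product foliation $\{L\times\{t\}\}$. Taking $f=\mathrm{pr}_2\circ\Psi^{-1}\colon\cU\to(-\varepsilon,\varepsilon)=:I$, the fibers of $f$ are exactly the leaves $\Phi_t(L)$, each a diffeomorphic, hence compact, copy of $L$; this is the asserted local fibration. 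Consistently, one checks $df=\alpha$ on $\cU$.

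Finally, for (2) I would run a connectedness argument on the saturated set $C\subset M$ of points lying on a compact leaf. Part (3) shows that $C$ is open. For closedness, take $x\in\overline C$ and a flow box $\Psi_x\colon D\times(-\delta,\delta)\to V$ around $x$ with $D$ a disk in the leaf $L_x$. Some compact leaf $L'$ meets $V$, say at $\Phi_{t_0}(p)$ with $p\in D\subset L_x$; since $\Phi_{t_0}$ carries leaves to leaves, $L'=\Phi_{t_0}(L_x)$, and therefore $L_x=\Phi_{-t_0}(L')$ is the diffeomorphic image of the compact leaf $L'$, hence compact. Thus $C$ is also closed, and as $M$ is connected and $C$ contains $L$, we conclude $C=M$. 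The one delicate point, and the main obstacle, is precisely this last step: to conclude that $L_x=\Phi_{-t_0}(L')$ is compact one needs $\Phi_{-t_0}$ to be defined on all of $L'$, i.e. completeness of the flow. In the setting of Theorem \ref{globalcompact} this is automatic since $M$ is compact; for general connected $M$ it must be secured by a uniform escape-time estimate for the flow near the compact leaf.
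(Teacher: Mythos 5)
Your proposal is correct and follows essentially the same strategy as the paper: take a closed defining one-form $\alpha$ (Theorem \ref{thm: vanishing first obstruction class}), pick a transverse vector field with $\alpha(v)=1$, observe $\cL_v\alpha=0$ so the flow maps leaves to leaves, and run an open--closed connectedness argument on the union of compact leaves. The only genuine divergence is in part (3): the paper applies the Poincar\'e lemma on a tubular neighbourhood of the compact leaf $L$ (using $i_L^*\alpha=0$ and $d\alpha=0$) to write $\alpha=df$ there, whereas you build the product structure directly from the flow map $\Psi(\ell,t)=\Phi_t(\ell)$; both are valid, and yours has the small advantage of producing the saturated product neighbourhood and the identification of nearby leaves with $\Phi_t(L)$ in one step, which is exactly what the closedness argument in (2) then consumes. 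Your closing caveat about completeness is well taken: for non-compact $M$ the flow of $v$ need not be defined for a uniform time, and the paper's own argument for the openness of $M\backslash N$ (which applies $\Phi_{-t_0}$ to the whole compact leaf $L$) has the same implicit requirement; this is harmless in the intended application (Theorem \ref{globalcompact}, where $M$ is compact), but you are right that the general connected case needs either compactness of $M$ or an additional uniform existence-time argument near the compact leaf.
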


\begin{proof}
\begin{enumerate}
\item\label{need v} Let $\alpha$ be a closed defining one-form of the foliation $\cF$, and $v$ a vector field on $M$ such that $\alpha(v)=1$; this vector field is transversal to the foliation $\cF$. The flow $\Phi_t$ of the vector field $v$, defined for $t\in(-\varepsilon,\varepsilon)$ for small enough $\varepsilon$, takes leaves to leaves diffeomorphically, since
$$\cL_v\alpha=\iota_v d\alpha + d\iota_v \alpha =0.$$

\item Let $N$ be the union of all compact leaves in $M$. The set $N$ is open: Given a leaf $L$ contained in $N$, the set $$\{\Phi_t(L)|t\in(-\varepsilon,\varepsilon)\}$$
is an open neighborhood of $L$ contained in $N$.

{To see that $M\backslash N$ is open, let $L'$ be a leaf in
$M\backslash N$ and $m\in L'$ a point. There exists a neighborhood
$V$ of $m$ in $L'$ and an $\varepsilon > 0$ such that $\Phi_t(L')$
is well defined for $t\in(-\varepsilon,\varepsilon)$, let
$U=\left\{\Phi_t(L'):t\in(-\varepsilon,\varepsilon)\right\}$ be a
neighborhood of $m$ in $M$. If $m$ is not an interior point of
$M\backslash N$, there exists a compact orbit $L$ which intersects
$U$, i.e., there would exist $m'\in V$ and
$t_0\in(-\varepsilon,\varepsilon)$ such that $\Phi_{t_0}(m')\in L$.
But this implies that $\Phi_{-t_0}(L)$ intersects $L'$, and hence
that $\Phi_{-t_0}(L)\subset L'$, and furthermore that
$\Phi_{-t_0}(L)= L'$, so $L'$ would be compact, which contradicts
the original assumption.}

Since $N$ is nonempty and $M$ is connected, we must have $N=M$.

\item Because $i_L^*\alpha=0$ with $L$ compact, Poincaré lemma guarantees the existence of a tubular neighbourhood $\cU$ of $L$ and function $f$ on it such that the leaf $L$ is the zero level set of $f$ and $\alpha=d f$ on $\cU$. Shrinking $\cU$ as necessary, we can assume it is a saturated neighbourhood and that the leaves are level sets of $f$.
\end{enumerate}
\end{proof}

So far we have proved that foliations with vanishing first invariant $c_\cF$ are locally trivial fibrations in the neighbourhood of a given compact leaf. When the manifold is compact, we can use the transverse vector field $v$ of the proof of (\ref{need v}) in Proposition \ref{alternative} to drag this fiber bundle structure, and by a C\u{e}ch-type construction obtain a global fiber bundle over $\SSS^1$. Furthermore, the choice of a transverse vector field $v$ whose flow takes leaves to leaves gives us an Ehresmann connection, which we use to lift the closed loop on the base of the fibration and thus obtain a holonomy map $\phi:L\to L$, thus obtaining the following:

\begin{corollary}
Let $\cF$ be a transversely orientable codimension one foliation of a compact connected manifold $M$ with $c_\cF=0$, and assume that the foliation contains a compact leaf $L$. Then, the manifold $M$ is the mapping torus\footnote{The mapping torus of $\phi:L\to L$ is the space $\frac{L\times \left[0,1\right]}{ (x,0)\sim (\phi(x),1)}.$} of the diffeomorphism $\phi:L\to L$ given by the holonomy map of the fibration over $\SSS^1$.
\end{corollary}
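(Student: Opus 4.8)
The plan is to realize $M$ concretely as a mapping torus by using the complete flow of the transverse vector field $v$ from Proposition \ref{alternative} as a trivialization in the direction transverse to $\cF$. Since $M$ is compact, the flow $\Phi_t$ of $v$ is complete, i.e. defined for all $t\in\RR$, and by part (1) of Proposition \ref{alternative} it carries leaves to leaves diffeomorphically; by part (2) all leaves are compact. The fibration $f:M\to\SSS^1$ with fiber $L$ is already supplied by Theorem \ref{globalcompact}, so the task is to exhibit $v$ as an Ehresmann connection for $f$ and to identify its holonomy around $\SSS^1$ with the gluing diffeomorphism of a mapping torus.

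First I would relate $v$ to $f$. Since the closed forms $\alpha$ and $df$ both define $\cF$ and the annihilator of $T\cF$ is one-dimensional, we have $\alpha=h\,df$ for a nowhere-vanishing $h\in C^\infty(M)$; closedness then forces $dh\wedge df=0$, so $h$ is constant along the leaves and descends to a nowhere-zero function $h=h(\theta)$ of the base coordinate $\theta=f$ alone. The normalization $\alpha(v)=1$ thus gives $df(v)=1/h(\theta)$, which depends only on $\theta$. Hence $v$ is $f$-related to a nowhere-vanishing vector field on $\SSS^1$, so $\Phi_t$ covers a strictly monotone reparametrized rotation of the base, and the first-return time to $L=f^{-1}(\theta_0)$ is the same positive number $T=\oint_{\SSS^1}h\,d\theta$ for every point of $L$. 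The return map $\phi:=\Phi_T|_L:L\to L$ is then a well-defined diffeomorphism, namely the holonomy of this connection around the loop $\SSS^1$.

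Next I would define $\Psi:L\times[0,T]\to M$ by $\Psi(x,t)=\Phi_t(x)$ and check that it descends to a diffeomorphism from the mapping torus $L\times[0,T]/(x,T)\sim(\phi(x),0)$ onto $M$. Surjectivity holds because every leaf is $\Phi_t(L)$ for some $t\in[0,T)$; injectivity on the interior holds because $t\mapsto f(\Phi_t(x))$ is strictly monotone on $[0,T)$ and covers each value of $\SSS^1$ exactly once, so any coincidence $\Phi_{t_1}(x_1)=\Phi_{t_2}(x_2)$ forces $t_1=t_2$ and hence $x_1=x_2$; and the only boundary identification is $\Phi_T(x)=\phi(x)=\Phi_0(\phi(x))$, which is precisely the mapping-torus gluing. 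Rescaling $[0,T]$ to $[0,1]$ puts this in the form stated in the footnote, identifying $M$ with the mapping torus of $\phi$.

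The hard part will be the uniform-return-time point: one must guarantee that $\Phi_t$ returns all of $L$ to $L$ at a single fiber-independent time $T$, for otherwise $\Phi_T|_L$ would not even be a self-map of $L$. This is exactly what the computation $df(v)=1/h(\theta)$ secures, and it rests on the closedness of both $\alpha$ and $df$ together with their defining the same foliation. Everything else is the standard verification that a smooth fiber bundle over $\SSS^1$ is the mapping torus of its monodromy, here with $v$ furnishing the connection whose holonomy is $\phi$.
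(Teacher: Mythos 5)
Your proof is correct and follows essentially the same route as the paper: the transverse vector field $v$ with $\cL_v\alpha=0$ serves as an Ehresmann connection for the fibration of Theorem \ref{globalcompact}, and its holonomy around the base circle is the gluing diffeomorphism of the mapping torus. The one detail you supply that the paper's sketch elides --- writing $\alpha=h\,f^*d\theta$ with $h$ constant on leaves, so that $v$ is $f$-related to a vector field on $\SSS^1$ and has a fiber-independent return time $T$ --- is exactly what justifies using the honest flow $\Phi_t$ (rather than reparametrized horizontal lifts) to exhibit the mapping torus structure.
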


\begin{remark}
If the foliation $\cF$ satisfies $c_\cF=0$ but has no compact leaves, and instead there exists a leaf $L$ such that $H^1(L, \mathbb R)=0$ (hypothesis of Thurston's theorem \cite{thurston}), then it can be proved that the foliation is a fibration over $\SSS^1$.
\end{remark}

\subsection{Vanishing (first and) second invariant(s): a Poisson result}\label{vanishing first and second Poisson}
We want to look in this section at the case of vanishing first and
second invariants, and for that we must now assume that $M$ is
endowed with a regular corank one Poisson structure and that $\cF$
is the corresponding symplectic foliation. Throughout this section,
we will assume that the manifold $M$ is compact and that there
exists  a compact leaf $L$ (and therefore {by Theorem}
\ref{poissonmappingtorus} all leaves are compact).

\begin{proposition}\label{transversepoisson}
The two invariants $c_\cF$ and $\sigma_\cF$ vanish if and only if there exists a Poisson vector field transversal to the foliation.
\end{proposition}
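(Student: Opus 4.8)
The plan is to treat the two implications separately, exploiting the fact that in this corank one setting the bivector $\Pi$ is completely encoded by the pair $(\alpha,\omega)$: the symplectic foliation is $\cF=\ker\alpha$, the leafwise symplectic forms are the restrictions $\omega_L=i_L^*\omega$, and $\Pi$ is their leafwise inverse (so $\Pi^\sharp(\alpha)=0$ and $\mathrm{im}\,\Pi^\sharp=\ker\alpha$). Consequently a vector field is Poisson exactly when its flow preserves this data, and the strategy throughout is to convert the condition $\cL_X\Pi=0$ into statements about $\cL_X\alpha$ and $\cL_X\omega$.

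For the forward implication, assume both invariants vanish. By Theorems \ref{thm: vanishing first obstruction class} and \ref{above} I may take the defining forms $\alpha$ and $\omega$ to be closed; note $\alpha\wedge\omega^n$ is then a volume form, since $\omega$ restricts to a symplectic form on each $2n$-dimensional leaf. I would introduce the Reeb vector field $R$ determined by $\iota_R\alpha=1$ and $\iota_R\omega=0$; this is well defined and nowhere zero because the volume condition forces the radical of $\omega$ to be a line transverse to $\ker\alpha$ on which $\alpha$ is nonzero. Cartan's formula together with closedness gives $\cL_R\alpha=d(\iota_R\alpha)=0$ and $\cL_R\omega=d(\iota_R\omega)=0$, so the flow of $R$ preserves $\alpha$ and $\omega$, hence the foliation and each leafwise symplectic form, hence $\Pi$. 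Thus $R$ is Poisson, and it is transverse to $\cF$ since $\alpha(R)=1$.

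For the converse, suppose $X$ is a Poisson vector field with $\alpha(X)$ nowhere zero. Its flow $\phi_t$ preserves $\Pi$, hence preserves the symplectic foliation and maps leaves symplectomorphically. Preservation of $\cF$ gives $\phi_t^*\alpha=g_t\alpha$, so infinitesimally $\cL_X\alpha=h\alpha$ for some $h\in C^\infty(M)$; and because each $\phi_t^*\omega$ is again a defining two-form, the argument showing $\sigma_\cF$ is well defined gives $\phi_t^*\omega-\omega\in\alpha\wedge\Omega^1$, whence $\cL_X\omega=\alpha\wedge\xi$ for some $\xi\in\Omega^1$. To read off $c_\cF=0$ I would first rescale $\alpha$ so that $\alpha(X)=1$; then $\cL_X\alpha=\iota_X(\beta\wedge\alpha)=\beta(X)\alpha-\beta$, and comparing with $\cL_X\alpha=h\alpha$ forces $\beta$ to be a multiple of $\alpha$, so $j\beta=0$.

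Finally, to obtain $\sigma_\cF=0$ I would pass to a \emph{closed} defining form $\alpha$ (available since $c_\cF=0$) and set $c=\alpha(X)$, so that $\cL_X\alpha=dc=h\alpha$. Expanding $\cL_X\omega=\iota_X(\mu\wedge\alpha)+d\iota_X\omega=\alpha\wedge\xi$ and solving yields $c\,\mu=\alpha\wedge(\xi+\iota_X\mu)-d(\iota_X\omega)$; dividing by $c$ and using $dc=h\alpha$ to rewrite $\tfrac{1}{c}\,d(\iota_X\omega)$ modulo $\alpha\wedge\Omega^1$ produces an expression of the form $\mu=d\nu+\gamma\wedge\alpha$, which by Theorem \ref{above} gives $\sigma_\cF=0$. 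The conceptual crux, and the step I expect to require the most care, is the translation between $\cL_X\Pi=0$ and the two form-level conditions $\cL_X\alpha\in\langle\alpha\rangle$ and $\cL_X\omega\in\alpha\wedge\Omega^1$; the only genuine computational subtlety is the bookkeeping of the nonconstant factor $c=\alpha(X)$ once $\alpha$ is required to be closed, which is precisely where the identity $dc=h\alpha$ is used.
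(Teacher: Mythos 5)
Your proof is correct, and while your forward implication is essentially the paper's (same vector field $v$ determined by $\alpha(v)=1$, $\iota_v\omega=0$, same Cartan-formula computation), your converse takes a genuinely different route. The paper fixes the forms adapted to $v$ and verifies $d\alpha=0$ and $d\omega=0$ \emph{pointwise}, by evaluating on frames built from $v$ and Hamiltonian vector fields and using the bracket identities $[v,u_f]=u_{v(f)}$ and $\omega(u_f,u_g)=\{f,g\}$; this has the advantage of producing, with no further work, the specific closed forms $\alpha$ and $\omega$ normalized by $v$ that are used later in the mapping torus and $b$-manifold constructions. You instead argue at the level of the obstruction classes: the flow of $X$ permutes leaves symplectomorphically, so Lemma \ref{lemma:mu1} gives $\cL_X\alpha=h\alpha$ and $\cL_X\omega\in\alpha\wedge\Omega^1$, and Cartan's formula then forces $j\beta=0$ and exhibits $\mu$ in the form $d\nu+\gamma\wedge\alpha$ required by the proofs of Theorems \ref{thm: vanishing first obstruction class} and \ref{above}. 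This is slightly longer (the bookkeeping with $c=\alpha(X)$ is avoidable, since your own first step shows the rescaled $\alpha$ with $\alpha(X)=1$ is already closed, so you could take $c\equiv 1$), but it buys independence from any particular normalization of $\omega$ and makes transparent exactly which structure the Poisson flow preserves. In the forward direction, where the paper computes $\iota_{\Pi}(\alpha\wedge\omega^n)=n\,\alpha\wedge\omega^{n-1}$ and applies $\cL_v$, your appeal to the fact that $\Pi$ is determined by the pair (foliation, leafwise symplectic forms) is a clean and valid substitute.
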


\begin{proof}
Given $\alpha$ and $\omega$ defining one- and two-forms respectively, let $v$ be the vector field uniquely defined by
\begin{equation}\label{define v}
\alpha(v)=1\text{ and }\iota_v\omega=0.
\end{equation}
Conversely, given a vector field $v$ on $M$ transversal to the foliation $\cF$, these equalities uniquely give us defining one- and two-forms $\alpha$ and $\omega$. We must prove that $v$ is Poisson if and only if only if $d\alpha=d\omega=0$, i.e., $c_\cF=\sigma_\cF=0$.


First, let us assume that $d\alpha=d\omega=0$. Then by (\ref{define
v}), we have $\cL_v\alpha=\cL_v\omega=0$, and we want to show that
$\cL_v\Pi=0$, where $\Pi$ is the Poisson bivector field on $M$.

Because $\alpha$ and $\omega$ are the defining one- and two- forms
of the symplectic foliation induced by $\Pi$ we have,
$$\iota_{\Pi} (\alpha\wedge \omega^n)=n\,\alpha \wedge\omega^{n-1}.$$
Applying $\cL_v$ to both sides and using $\cL_v\alpha=\cL_v\omega=0$ we obtain
$$\iota_{(\cL_v\Pi)}(\alpha\wedge \omega^n)=0,$$
which implies that $\cL_v\Pi=0$ because $\alpha\wedge \omega^n$ is a volume form.

Now let us assume that $v$ is a Poisson vector field, we will first check that $d\alpha=0$. For this we just need to apply $d\alpha$ to any arbitrary pair of vector fields. Since $\alpha$ is the defining one-form of the symplectic foliation, the two-form $d\alpha$ must vanish on pairs of the form $(u_f,u_g)$. For pairs of the form $(v, u_f)$, we have
\begin{equation}\label{eqn:poissonvf}
d\alpha(v,u_f)=v(\alpha(u_f))-u_f(\alpha(v))- \alpha([v,u_f]).
\end{equation}
All three terms on the right vanish, the last one because $v$ being a Poisson vector field implies that $[v,u_f]=u_{v(f)}$. Thus, (\ref{eqn:poissonvf}) becomes $d\alpha(v,u_f)=0$, and so $d\alpha=0$.



To see that $d\omega=0$, it suffices to check it on triples of the form $(v,u_f,u_g)$. Using  $\omega(u_{h_1},u_{h_2})=\{h_1,h_2\}$ and the classical formula
\begin{align*}
d\omega(u,v,w)=u(\omega(v,w))+v(\omega(w,u))+w(\omega(u,v))-\\
-\omega([u,v],w)-\omega([v,w],u)-\omega([w,u],v),
\end{align*}
we obtain
$$d\omega(v,u_f,u_g)=v(\{f,g\})-\{v(f), g\}-\{f,v(g)\},$$
which again vanishes when $v$ is a Poisson vector field.

\end{proof}

As before, the choice of a transverse vector field $v$ gives an Ehresmann connection on the fibration
that we can use to lift the closed loop on the base of the fibration to obtain a holonomy map $\phi:L\to L$.

Because $v$ is a Poisson vector field, its flow drags the symplectic structure of one leaf to define the Poisson structure on $M$ (because $M$ is a symplectic fibration over a one dimensional base, the Guillemin-Lerman-Sternberg condition \cite{gls} guarantees that the minimal coupling structure yields a unique Poisson structure on $M$). Furthermore, the parallel transport of the connection preserves the symplectic structure on the leaves, and in particular the holonomy map $\phi$ is a symplectomorphism:

\begin{theorem}\label{poissonmappingtorus}
Let $M$ be an oriented compact connected regular Poisson manifold of
corank one and $\cF$ its symplectic foliation. If
$c_\cF=\sigma_\cF=0$ and $\cF$ contains a compact leaf $L$, then
every leaf of $\cF$ is symplectomorphic to $L$.

Furthermore, $M$ is the total space of a fibration over $\SSS^1$ and it is the mapping torus of the symplectomorphism $\phi:L\to L$ given by the holonomy map of the fibration over $\SSS^1$.
\end{theorem}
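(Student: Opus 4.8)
The plan is to separate the statement into its underlying smooth content, which is already in hand, and the symplectic refinement, which is where the second invariant enters. First I would invoke Theorem \ref{globalcompact}: the vanishing of $c_\cF$ together with the presence of a compact leaf $L$ guarantees that the foliation is transversely orientable (it is defined by a nonvanishing closed one-form), that every leaf is diffeomorphic to $L$, and that $M$ fibers over $\SSS^1$ with fiber $L$. The corollary following Proposition \ref{alternative} then identifies $M$ with the mapping torus of the holonomy diffeomorphism $\phi\colon L\to L$. At this point the only remaining claims are that the leaves are in fact symplectomorphic and that $\phi$ can be taken to be a symplectomorphism.

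To obtain the symplectic structure I would feed in the vanishing of the second invariant through Proposition \ref{transversepoisson}: since $c_\cF=\sigma_\cF=0$, there is a Poisson vector field $v$ transverse to $\cF$, determined by $\alpha(v)=1$ and $\iota_v\omega=0$, where $\alpha$ and $\omega$ are the closed defining one- and two-forms. Because $M$ is compact, $v$ is complete, so its flow $\Phi_t$ is a one-parameter group of diffeomorphisms of $M$. From $\cL_v\alpha=\iota_v\,d\alpha+d\,\iota_v\alpha=0$ the flow permutes the leaves, and from $\cL_v\omega=\iota_v\,d\omega+d\,\iota_v\omega=0$ we get $\Phi_t^*\omega=\omega$ for every $t$.

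The key step is a restriction computation. Write $L_t=\Phi_t(L)$ and $\psi_t=\Phi_t|_L\colon L\to L_t$; since $i_{L_t}\circ\psi_t=\Phi_t\circ i_L$, we find
\[
\psi_t^*\omega_{L_t}=(i_{L_t}\circ\psi_t)^*\omega=(\Phi_t\circ i_L)^*\omega=i_L^*\Phi_t^*\omega=i_L^*\omega=\omega_L,
\]
so each $\psi_t$ is a symplectomorphism and every leaf is symplectomorphic to $L$. Since $\Phi_t$ is a one-parameter group permuting the leaves, the image leaf $L_t$ depends only on $t$ and there is a smallest $T>0$ with $\Phi_T(L)=L$; the Ehresmann holonomy around the base loop is precisely this first-return map $\phi=\Phi_T|_L$, which by the displayed identity is a symplectomorphism of $(L,\omega_L)$. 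Dragging $\omega_L$ along the flow of the Poisson field $v$ recovers the Poisson structure of $M$ and exhibits $M$ as the mapping torus of the symplectomorphism $\phi$, as claimed.

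The main obstacle I expect is not conceptual but lies in matching up the two descriptions of $\phi$ and checking that they are consistent: one must verify that the Ehresmann parallel transport determined by the connection $v$ coincides with the time-$T$ first-return map of the flow $\Phi_t$, and that the first-return time is genuinely constant over $L$ (which follows because $\Phi_t$ descends to a flow on the one-dimensional leaf space $\SSS^1$ whose generating field is nowhere zero, so all orbits share a common period). Once this identification is secured, the symplectomorphism property of $\phi$ and the consistency of the mapping-torus gluing are immediate from $\Phi_t^*\omega=\omega$.
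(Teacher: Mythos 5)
Your proposal is correct and follows essentially the same route as the paper: reduce to the topological mapping-torus result (Theorem \ref{globalcompact} and its corollary), then use the transverse Poisson vector field of Proposition \ref{transversepoisson} as an Ehresmann connection whose flow preserves the closed defining two-form, so that parallel transport and the holonomy map are symplectomorphisms. Your explicit computation $\psi_t^*\omega_{L_t}=\omega_L$ and the constancy of the first-return time simply make precise what the paper leaves as a one-line remark.
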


Regular corank one Poisson manifolds with $c_{\cF}=\sigma_{\cF}=0$ are interesting in Poisson geometry because they can be characterized as manifolds which are the critical hypersurfaces of Poisson $b$-manifolds:

\begin{definition}
An oriented Poisson manifold $(M^{2n},\Pi)$ is a \textbf{Poisson $b$-manifold} if the map
$$x\in M\mapsto(\Pi(x))^n\in\Lambda^{2n}(TM)$$
is transverse to the zero section. {In particular, this implies that
the critical set $\{x\in M|(\Pi(x))^n=0\}$ is a hypersurface.}
\end{definition}

\begin{theorem}\label{further}\cite{guimipi}
Let $(M,\Pi)$ be an oriented compact regular Poisson manifold of corank one, $\cF$ its symplectic foliation and $v$ a Poisson vector field transversal to $\cF$.

If $c_\cF=\sigma_\cF=0$, then there exists {a Poisson $b$-manifold $(U,\tilde{\Pi})$, with $M\subset U$ its critical locus, and such that $\tilde{\Pi}$ restricted to $M$ is $\Pi$.}
Furthermore, this extension is unique up to isomorphism among the extensions for which $[v]$ is the image of the modular class under the map
$$H^1_\text{Poisson}(U)\longrightarrow H^1_\text{Poisson}(M).$$
\end{theorem}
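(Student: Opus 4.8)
The plan is to realise the extension concretely as a $b$-symplectic manifold and then to establish uniqueness by a Moser argument in the $b$-category. Since $c_\cF=\sigma_\cF=0$, Theorems \ref{thm: vanishing first obstruction class} and \ref{above} let me choose a \emph{closed} defining one-form $\alpha$ and a \emph{closed} defining two-form $\omega$, normalised so that $\alpha(v)=1$ and $\iota_v\omega=0$ as in Proposition \ref{transversepoisson}. First I would set $U=M\times(-\varepsilon,\varepsilon)$ with coordinate $t$, viewed as a $b$-manifold with critical hypersurface $M=\{t=0\}$, and put
$$\tilde\omega=\omega+\frac{dt}{t}\wedge\alpha.$$
This $b$-two-form is closed because $d\alpha=d\omega=0$, and it is $b$-nondegenerate since
$$\tilde\omega^{\,n+1}=(n+1)\,\frac{dt}{t}\wedge\alpha\wedge\omega^n$$
is a $b$-volume form, $\alpha\wedge\omega^n$ being the volume form $\Theta$ on $M$. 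Its dual bivector $\tilde\Pi$ is then a Poisson structure whose top power $\tilde\Pi^{\,n+1}$ vanishes linearly in $t$, so $(U,\tilde\Pi)$ is a Poisson $b$-manifold with critical locus exactly $M$.

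Next I would check that $\tilde\Pi|_M=\Pi$. Because $\iota_v\omega=\iota_{\partial_t}\omega=0$ while $\frac{dt}{t}\wedge\alpha$ is supported in the plane spanned by $dt$ and $\alpha$, the form $\tilde\omega$ splits as a direct sum over the leaf directions and the $(t,v)$-plane; inverting each block gives
$$\tilde\Pi=\Pi+t\,(\partial_t\wedge v),$$
the leaf block inverting to $\Pi$ and the transverse block to $t\,\partial_t\wedge v$ (from $\iota_{t\partial_t}\tilde\omega=\alpha$ and $\iota_v\tilde\omega=-\frac{dt}{t}$). In particular $\tilde\Pi$ is a genuinely smooth bivector, and setting $t=0$ annihilates the transverse term and recovers $\Pi$.

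To match the modular class I would compute the modular vector field of $\tilde\Pi$ relative to the honest volume form $\Theta_U=dt\wedge\alpha\wedge\omega^n=\tfrac{t}{n+1}\,\tilde\omega^{\,n+1}$. Since the modular vector field of $\tilde\Pi$ relative to its Liouville $b$-volume $\tilde\omega^{\,n+1}$ vanishes, property (2) of the modular vector field (the constant factor contributing nothing) together with $\tilde\Pi^\sharp(\tfrac{dt}{t})=-v$ gives
$$X^{\Theta_U}_{\tilde\Pi}=-u_{\log|t|}=-\tilde\Pi^\sharp\!\left(\tfrac{dt}{t}\right)=v.$$
As this vector field restricts on $M=\{t=0\}$ to the given Poisson vector field $v$, the image of the modular class of $(U,\tilde\Pi)$ under $H^1_\text{Poisson}(U)\to H^1_\text{Poisson}(M)$ is $[v]$, as demanded.

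The hard part is uniqueness. Given a second extension $(U',\tilde\Pi')$ with $\tilde\Pi'|_M=\Pi$ and the same modular class image $[v]$, I would first identify tubular neighbourhoods so that both structures live on $M\times(-\varepsilon,\varepsilon)$, then dualise to $b$-symplectic forms and run a relative Moser path method interpolating between $\tilde\omega$ and $\tilde\omega'$. This requires a $b$-Poincar\'e lemma ensuring that $\tilde\omega'-\tilde\omega$ is $b$-exact near $M$, together with the observation that equality of the modular classes forces the singular $\frac{dt}{t}$-parts to agree, so that no residual cohomological obstruction to the interpolation survives. I expect this normal-form step, and in particular the verification that the modular class is precisely the invariant controlling the logarithmic part of the $b$-symplectic germ, to be the main obstacle; it is carried out in detail in \cite{guimipi}.
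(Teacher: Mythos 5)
The paper does not actually prove Theorem \ref{further}: it is stated with a citation to the sequel \cite{guimipi}, so there is no internal proof to compare against. The only evidence inside the paper is the $\mathbb{T}^3$ example, where the authors exhibit exactly your extension, $\tilde\omega = d(\log t)\wedge\alpha + \omega$ on $U = M\times(-\varepsilon,\varepsilon)$. Your existence construction is correct and is clearly the intended one: with $\alpha,\omega$ closed and normalised by $\alpha(v)=1$, $\iota_v\omega=0$, the form $\tilde\omega=\omega+\frac{dt}{t}\wedge\alpha$ is a closed nondegenerate $b$-form, its dual bivector $\tilde\Pi=\Pi+t\,\partial_t\wedge v$ is smooth, restricts to $\Pi$ on $\{t=0\}$, and $\tilde\Pi^{\,n+1}=(n+1)\,t\,\Pi^n\wedge\partial_t\wedge v$ vanishes transversally, so $(U,\tilde\Pi)$ is a Poisson $b$-manifold with critical locus $M$. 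The computation identifying the modular class with $[v]$ is also right in substance, though note that you invoke the change-of-volume formula $X^{H\Theta}_\Pi = X^\Theta_\Pi - u_{\log H}$ with $H=t/(n+1)$, a function that \emph{vanishes} on $M$, whereas the formula as stated in the paper requires $H$ nonvanishing; the conclusion survives because $\tilde\Pi^\sharp(dt/t)=v$ is smooth, but a careful writeup should either compute $\cL_{u_f}\Theta_U$ directly or justify the formula in the $b$-category.

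The genuine gap is uniqueness. What you offer there is a plan, not a proof: the $b$-Poincar\'e lemma near the critical hypersurface, the claim that equality of modular classes forces the $\frac{dt}{t}$-parts of the two germs to agree (this is really a Mazzeo--Melrose-type decomposition of $b$-cohomology identifying the modular class with the residue of the $b$-form along $M$), and the relative Moser argument in the $b$-category are each asserted rather than established, and each is a nontrivial step. You also need to justify identifying the two tubular neighbourhoods compatibly with the Poisson structures before Moser can start. Since the paper itself defers all of this to \cite{guimipi}, your proposal is at the same level of completeness as the paper for the uniqueness half, but it should not be presented as a proof of that half.
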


Conversely, we will see in \cite{guimipi} that the critical hypersurface of a Poisson $b$-manifold, endowed with the restriction of the Poisson $b$-structure, is always a regular corank one Poisson manifold with vanishing first and second invariants. Note that in Theorem \ref{further} we do not require that $M$ be foliated by compact leaves.

\begin{example}\label{Davids example}
Take  $(N^{2n+1},\pi)$ to be a regular corank one Poisson manifold
with compact leaves, and let $X$ be a Poisson vector field on it.
Consider the product $\SSS^1\times N$ endowed with the bivector
field
$$\Pi=f(\theta)\frac{\partial}{\partial\theta}\wedge X+\pi.$$
This is a Poisson $b$-manifold provided that $f$ vanishes linearly
and the vector field $X$ is transverse to the symplectic leaves of
$N$.

The critical locus of this Poisson $b$-structure has as many copies
of $N$ as $f$ has zeros. Because of Theorem \ref{thm:unimodular},
the fact that $X$ is  a Poisson vector field transverse to the
symplectic foliation implies that the two invariants of $N$ vanish.
 by Theorem \ref{poissonmappingtorus}, each copy of $N$ is a
symplectic mapping torus and the gluing diffeomorphism is the
time-one map of the flow of $X$.
\end{example}

\begin{example}
Let $\mathfrak g$ be the Lie algebra of the affine group in
dimension 2. This is a model for noncommutative Lie algebras in
dimension $2$ and for a basis $e_1$, $e_2$, the bracket is
$[e_1,e_2]=e_2$. From this Lie algebra structure we get the Poisson
bivector field
$$\Pi=y\frac{\partial}{\partial x }\wedge \frac{\partial}{\partial y }.$$
In this example the critical locus is the $x$-axis, which is non-compact, and the its symplectic leaves are the points of that line.
The upper and lower half-planes are open dense symplectic leaves. As
we will see in \cite{guimipi}, this is a prototypical example of
Poisson $b$-manifolds.
\end{example}

\begin{example}
Let $M=\mathbb T^3$ with coordinates $\theta_1,\theta_2,\theta_3$
(we work in a covering) and $\cF$ the codimension
one foliation with leaves given by
$$\theta_3=a\theta_1+b \theta_2+k,\quad k\in\RR,$$
where $a,b, 1\in\RR$ are fixed and independent
over $\QQ$. This implies that each leaf is diffeomorphic to $\RR^2$
\cite{mamaev}. Then,
$$\alpha=\frac{a}{a^2+b^2+1}\,d\theta_1+\frac{b}{a^2+b^2+1}\,d\theta_2-\frac{1}{a^2+b^2+1}\,d\theta_3$$
is a defining one-form for $\cF$ and there is a Poisson structure $\Pi$ on $M$ of which
$$\omega=d\theta_1\wedge d\theta_2+b\, d\theta_1\wedge d\theta_3-a\,d\theta_2\wedge d\theta_3$$
is the defining two-form. Note that $\alpha$ and $\omega$ are
closed, and so the invariants $c_\cF$ and $\sigma_\cF$ vanish. Note
that in this example leaves are non-compact so Theorem
\ref{poissonmappingtorus} does not apply. However, Theorem
\ref{further} still holds, and indeed $(M,\Pi)$ can be extended to
$(U,\tilde{\Pi})$ where $U=M\times(-\varepsilon,\varepsilon)$ and
$\tilde{\Pi}$ is the bivector field dual to the two-form
$$\tilde\omega= d(\log t)\wedge \alpha+ \omega.$$
\end{example}

\subsection{Vanishing (first and) second invariants: explicit integration of the Poisson structure}\label{vanishing first and second algebroid}
A Lie algebroid structure over a manifold $M$ consists of a vector bundle $E$ together with a bundle map $\rho:E\to TM$ and a Lie bracket $[~,~]_E$ on the space of sections satisfying, for all $\alpha, \beta\in \Gamma(E)$ and all $f\in C^{\infty}(M)$, the Leibniz identity:
  $$ [\alpha, f\beta]_E= f[\alpha,\beta]_E+L_{\rho(\alpha)}(f)\beta.$$

Given a Lie groupoid one can naturally associate to it a Lie
algebroid structure, but the converse is not true: given a Lie
algebroid it is not always possible to produce a smooth Lie groupoid
with the prescribed Lie algebroid structure. However, when the Lie
algebroid is in fact a Lie algebra (case of Lie algebroid  over a
point), the integration to a Lie group is guaranteed by Lie's third
theorem. In \cite{marui2}, Marius Crainic and Rui Loja Fernandes
solved the problem of integrability of a Lie algebroid to a Lie
groupoid.


Given a Poisson manifold $(M,\pi)$, there exists a natural Lie algebroid structure on $T^*M$: the Poisson cotangent Lie
algebroid has anchor map $\pi^{\sharp}$ and Lie bracket defined by
 $$ [\alpha, \beta]= L_{\pi^{\sharp}(\alpha)}(\beta)- L_{\pi^{\sharp}(\beta)}(\alpha)- d(\pi(\alpha, \beta)).$$

In this case the integrability problem consists of associating a symplectic groupoid to this Lie algebroid structure, as studied by Marius Crainic and Rui Loja Fernandes in \cite{marui1}. The canonical integration, or Weinstein's groupoid, is a symplectic groupoid integrating the algebroid which is source simply connected.

Recall the following theorem \cite[Corollary 14]{marui1},
\begin{theorem}\label{Crainic-Fernandes}[Crainic-Fernandes]
Let $M$ be a regular Poisson manifold. Then:
\begin{enumerate}
\item If $M$ admits a leafwise symplectic embedding then every leaf of $M$ is a Lie-Dirac submanifold.
\item If every leaf of $M$ is a Lie-Dirac submanifold then $M$ is integrable.
\end{enumerate}
\end{theorem}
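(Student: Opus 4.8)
This statement is quoted from \cite{marui1}, so strictly speaking no argument is required here; nonetheless, let me indicate how I would establish it within the Crainic--Fernandes integrability framework, since this is the form in which the result enters the present paper. The plan is to treat the two implications separately. The first is an essentially formal, pointwise linear-algebra argument built from a leafwise symplectic extension, whereas the second rests on the monodromy criterion for integrability and is where all the genuine difficulty lies.

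For part (1), I would begin by unpacking the hypothesis: a leafwise symplectic embedding of the regular Poisson manifold $M$ provides, in particular, a closed two-form $\omega\in\Omega^2(M)$ whose restriction to each leaf $L\in\cF$ is the leaf symplectic form $\omega_L$ (this is exactly the data produced in this paper by a closed defining two-form, i.e. when $\sigma_\cF=0$). Given such an $\omega$, at each point $x\in L$ I would define the subspace $E_x\subset T_xM$ to be the $\omega_x$-orthogonal complement of $T_xL$. Since $\omega|_{T_xL}=\omega_L$ is nondegenerate, $E_x\cap T_xL=0$, and a dimension count yields the splitting $T_xM=T_xL\oplus E_x$. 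I would then check that this splitting exhibits $L$ as a Lie--Dirac submanifold: the projection along $E$ carries the Poisson bivector, which is already tangent to $L$, to $\pi_L$, and the closedness $d\omega=0$ is precisely what guarantees that the identification of $T^*L$ with the subbundle $E^0\subset T^*M|_L$ realizes $T^*L$ as a Lie subalgebroid of the cotangent algebroid, which is the defining property of a Lie--Dirac submanifold.

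For part (2), the route is the integrability criterion of \cite{marui1}: the cotangent Lie algebroid $T^*M$ integrates to a source-simply-connected symplectic groupoid if and only if the monodromy groups $\mathcal N_x$ are locally uniformly discrete. For a regular Poisson manifold these groups live in the transverse direction $\nu_x=T_xM/T_xL$ and arise as the image of the ``variation of symplectic area'' homomorphism $\partial\colon\pi_2(L,x)\to\nu_x$, which records the first-order transverse change of $\int_{S^2}\omega_{L'}$ as the leaf $L'$ moves off $L$. I would argue that the Lie--Dirac splitting supplied by part (1) furnishes, in a transverse slice, a local product model of the Poisson structure, along which the leafwise symplectic cohomology classes vary in a flat, controlled way; this forces $\partial$ to be locally constant and hence its image to be locally uniformly discrete. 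Feeding this into the criterion gives integrability.

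The main obstacle is entirely in part (2). Part (1) is a pointwise construction together with a single closedness check, and I expect it to be routine. By contrast, part (2) presupposes the full path-space construction of the Weinstein groupoid and the computation of the second-order monodromy obstruction; the delicate point is the \emph{uniformity} of the discreteness as one approaches topologically nontrivial leaves, which is exactly the phenomenon responsible for nonintegrability of general Poisson manifolds. Rather than reprove these monodromy estimates, I would import them from \cite{marui1,marui2} and reduce the theorem to verifying that the Lie--Dirac hypothesis kills the relevant obstruction class.
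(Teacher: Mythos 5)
The paper offers no proof of this statement: it is imported verbatim as Corollary 14 of \cite{marui1}, so your decision to cite rather than reprove is exactly the paper's own treatment. Your accompanying sketch (the $\omega$-orthogonal splitting for part (1), the monodromy/uniform-discreteness criterion for part (2)) is consistent with the original Crainic--Fernandes argument and introduces no claims the paper would need to justify independently.
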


The second obstruction class $\sigma_\cF$ can be interpreted via Gotay's embedding theorem \cite{gotay} and measures
 the obstruction for a closed two-form to exist on $Z$ which restricts to $\omega_L$ on each leaf $L$. If $c_\cF=0$, we get a leafwise
  symplectic embedding and Theorem \ref{Crainic-Fernandes} guarantees integrability.
  Furthermore, using Theorem \ref{poissonmappingtorus} we can obtain an explicit characterization of the Weinstein's integrating groupoid:

\begin{corollary}
The Poisson structure on a compact manifold with
vanishing invariants $c_\cF$ and $\sigma_\cF$  and a compact leaf is
integrable (in the Crainic-Fernandes sense) and the Weinstein's
symplectic groupoid is a mapping torus.
\end{corollary}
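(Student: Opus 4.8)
The plan is to split the corollary into its two assertions: integrability, which will follow from the Crainic--Fernandes criterion once we produce a leafwise symplectic embedding, and the explicit shape of the Weinstein groupoid, which we read off from the mapping torus description of Theorem \ref{poissonmappingtorus}.

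For integrability I would argue as follows. Since $\sigma_\cF=0$, Theorem \ref{above} provides a closed two-form $\omega$ on $M$ restricting to the leaf symplectic form $\omega_L$ on each leaf. This $\omega$ has constant rank $2n$ on the $(2n+1)$-manifold $M$, hence is presymplectic with one-dimensional kernel, so Gotay's coisotropic embedding theorem \cite{gotay} embeds $M$ coisotropically in a symplectic manifold $(W,\Omega)$ with $\Omega$ restricting to $\omega$; in particular each leaf $L$ becomes a symplectic submanifold of $W$, i.e. $M$ admits a leafwise symplectic embedding. Part (1) of Theorem \ref{Crainic-Fernandes} then makes every leaf a Lie--Dirac submanifold, and part (2) yields integrability of $(M,\Pi)$. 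This half uses only the vanishing of $\sigma_\cF$.

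For the explicit groupoid I would start from Theorem \ref{poissonmappingtorus}, which presents $M$ as the mapping torus of a symplectomorphism $\phi\colon L\to L$ of the compact symplectic leaf $(L,\omega_L)$; equivalently $M=(L\times\RR)/\mathbb{Z}$ with $\mathbb{Z}$ acting by $(x,t)\mapsto(\phi(x),t+1)$, the transverse coordinate descending to the Casimir $f\colon M\to\SSS^1$. On the cover $L\times\RR$ the Poisson structure is the product of $(L,\omega_L)$ with the zero structure on $\RR$, so its source-simply-connected symplectic groupoid is $\mathcal{G}_L\times T^*\RR$, where $\mathcal{G}_L$ is the fundamental groupoid $\Pi_1(L)$ of the leaf carrying its canonical multiplicative symplectic form and $T^*\RR$ integrates the Casimir direction. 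Since $\phi$ is a symplectomorphism it lifts functorially to a symplectic groupoid automorphism $\Phi\colon\mathcal{G}_L\to\mathcal{G}_L$, and the generator of $\mathbb{Z}$ acts on $\mathcal{G}_L\times T^*\RR$ by $\Phi$ times the cotangent lift of $t\mapsto t+1$. I would then identify the Weinstein groupoid of $M$ with the quotient of $\mathcal{G}_L\times T^*\RR$ by this action; since the action is trivial on the cotangent-fibre coordinate, this quotient is the mapping torus of $\Phi\times\mathrm{id}_\RR$, hence a mapping torus.

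The crux is the last identification: showing that this hand-built quotient really is the canonical source-simply-connected integration of $(M,\Pi)$, rather than some other integration, and that it inherits a multiplicative symplectic form. The key point is that $\mathbb{Z}$ acts freely and properly discontinuously on the base $L\times\RR$, so its generator carries each source fibre of $\mathcal{G}_L\times T^*\RR$ diffeomorphically onto a disjoint one; consequently the source fibres of the quotient are unchanged, equal to $\widetilde{L}\times\RR$ with $\widetilde{L}$ the universal cover of $L$, and in particular simply connected. Thus the quotient is source-simply-connected and integrates $T^*M$, so by uniqueness it is $\Sigma(M)$; checking that the symplectic form and multiplication descend along the $\mathbb{Z}$-action is then routine, since $\Phi$ is a symplectic groupoid automorphism and the canonical form on the $T^*\RR$ factor is translation-invariant. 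This exhibits $\Sigma(M)$ as the claimed mapping torus.
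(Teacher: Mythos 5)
Your proposal is correct and follows essentially the same route as the paper: integrability via the closed defining two-form, Gotay's coisotropic embedding and the Crainic--Fernandes criterion, and the groupoid obtained as the quotient of $\Pi_1(L)\times T^*\RR$ by the lift of the gluing symplectomorphism of Theorem \ref{poissonmappingtorus}. Your added verification that the quotient remains source-simply-connected (source fibres $\widetilde{L}\times\RR$) is a useful detail the paper leaves implicit, but it does not change the argument.
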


\begin{proof}
Fix a compact symplectic leaf $L$. Consider the Weinstein's groupoid
of a symplectic leaf $(\Pi_1(L),\Omega)$ and the product
$(\Pi_1(L)\times T^*(\mathbb R),\Omega+d\lambda_{\text{liouville}})$
with the product groupoid structure. Let $f$ be the time-1 flow of
$v$, the Poisson vector field transverse to the symplectic
foliation. {By the construction provided in Theorem
\ref{poissonmappingtorus}, $v$ has periodic orbits. Now let
$\tilde{f}$ be the lift to $\Pi_1(L)$, the Weinstein's groupoid of
$L$ and on this product define the \lq\lq natural lift" of
$\tilde{f}$ , namely the map
$\phi:(\tilde{x},(t,\eta))\mapsto(\tilde{f},(t+1,\eta)).$} This map
preserves the product symplectic groupoid structure and thus by
identification induces a symplectic groupoid structure on the
mapping torus which has $\phi$ as gluing diffeomorphism.
\end{proof}

\end{document}